\documentclass[12pt]{amsart}
\usepackage{graphicx} 

\usepackage{soul} 
\usepackage{amscd,amssymb,mathtools}
\usepackage[arrow,matrix,graph,frame,poly,arc,tips]{xy}
\usepackage[dvipsnames]{xcolor}
\usepackage{graphicx}
\usepackage{calrsfs}
\usepackage[labelfont=rm]{subcaption}
\usepackage{tikz-cd} 
\usepackage{tikz}
\usetikzlibrary{decorations.markings}
\usetikzlibrary{shapes}
\usetikzlibrary{backgrounds}
\usetikzlibrary{calc}
\usetikzlibrary{shapes.misc, positioning}
\usepackage{mdwlist}
\usepackage{xfrac}
\DeclareCaptionSubType*{figure}

\usepackage{multirow}
\usepackage{enumerate}
\usepackage{verbatim}
\usepackage{comment}
\usepackage{todonotes}

\DeclarePairedDelimiter{\form}{\langle}{\rangle}

    \newcommand\ba{\begin{align*}}
    \newcommand\ea{\end{align*}}
    \newcommand\be{\begin{enumerate}}
    \newcommand\ee{\end{enumerate}}
    \newcommand\bpf{\begin{proof}}
    \newcommand\epf{\end{proof}}
    \newcommand\bpp{\begin{prop}}
    \newcommand\epp{\end{prop}}
    \newcommand\bpb{\begin{prob}}
    \newcommand\epb{\end{prob}}
    \newcommand\bd{\begin{defn}}
    \newcommand\ed{\end{defn}}
    \newcommand\bh{\begin{hint}}
    \newcommand\eh{\end{hint}}

    \newcommand\N{\mathbb{N}}

    \newcommand\Z{\mathbb{Z}}

    \newcommand\cay{\operatorname{Cayley}}

\DeclareMathOperator\radconv{\rho}

    \newcommand\rk{\operatorname{rk}}

    \DeclareMathOperator\Aut{Aut}

    \def\thetitle{Small  growth rates of free groups}
    \def\theauthors{{Koji Fujiwara, Sang-hyun Kim, Ryokichi Tanaka}}
    \usepackage{hyperref}
    \hypersetup{
    	colorlinks=false,
    	plainpages,
    	urlcolor=black,
    	linkcolor=black
    	pdftitle=   \thetitle,
    	pdfauthor=  {\theauthors}
    }

    \theoremstyle{plain}
    
    \newtheorem{thm}{Theorem}[section]
    \newtheorem{lem}[thm]{Lemma}
    \newtheorem{lemma}[thm]{Lemma}
    \newtheorem{cor}[thm]{Corollary}
    \newtheorem{prop}[thm]{Proposition}
    
    \newtheorem{que}[thm]{Question}
    
    \newtheorem*{claim*}{Claim}

    \theoremstyle{remark}
    \newtheorem{exmp}[thm]{Example}
    \newtheorem{rem}[thm]{Remark}

    \theoremstyle{definition}
    \newtheorem{defn}[thm]{Definition}
    \newtheorem{prob}{Problem}[section]

\begin{document}
    \title\thetitle
    \date{\today}

    
    \author[K. Fujiwara]{Koji Fujiwara}
    \address{OIST, Okinawa, 904-0497, Japan; KUIAS and RIMS, Kyoto University, Kyoto, 606-8502, Japan}
    \email{koji.fujiwara@oist.jp, fujiwara.koji.4n@kyoto-u.jp}    
    \thanks{K.F.\ is supported by JSPS Grant-in-Aid for Scientific Research 25H00588.}
    \urladdr{}
    
    \author[S. Kim]{Sang-hyun Kim}
    \address{School of Mathematics, Korea Institute for Advanced Study (KIAS), Seoul, 02455, Korea}
    \email{skim.math@gmail.com}
    \thanks{S.K.\ is supported by Mid-Career Researcher Program (RS-2023-00278510) through the National Research Foundation funded by the government of Korea, by KIAS Individual Grant (MG073602) at Korea Institute for Advanced Study,
    and by KIAS--KAIST Joint Research Group Grant.}
    \urladdr{https://kimsh.kr}

    \author[R. Tanaka]{Ryokichi Tanaka}
    \address{Department of Mathematics, 
    Kyoto University, Kyoto, 606-8502, Japan}
    \email{rtanaka@math.kyoto-u.ac.jp}
    \thanks{R.T.\ is supported by 
JSPS Grant-in-Aid for Scientific Research 24K06711.}
    \urladdr{}

\begin{abstract}
We introduce the notion of a Magnus marking for 
a finite generating set of a group and prove a certain expansion property. 
Using this property, we determine the second and third smallest growth rates of the rank-$d$ free group for $d\ge 2$. 
We also give a new lower bound for the smallest growth rate of the genus-$g$ surface group for $g\ge 2$, as well as a lower bound for the growth rate associated with a  one-relator presentation.
\end{abstract}

    \maketitle
    
\setcounter{tocdepth}{1}
    

\section{Introduction}\label{sec:intro}

The main purpose of this paper is to determine the second and third smallest growth rates of a non-abelian free group. The smallest growth rate is already known. 

Let $G$ be a finitely generated group with a finite generating set $S$.
We denote by $B_{G,S}(n)=B_S(n)$ the set of elements in $G$ whose word lengths are at most $n$ relative to $S$.
The {\it growth rate} of $G$ with respect to $S$ is defined by
$$
e(G,S):=\lim_{n\to\infty} |B_S(n)|^{1/n} \in [1, 2|S|-1].
$$
Here, for a set $A$, we denote the cardinality by $|A|$.
We say a finitely generated group has 
{\it exponential growth} if there exists a finite generating set $S$ with $e(G,S)>1$.

We define the {\it minimal growth rate} (or the {\it uniform exponential growth rate}) by $e(G):=\inf_S e(G, S)$, where $S$ runs over all finite generating sets of $G$, see \cite[Definition 5.11]{Gromov}. 
If $e(G) >1$ then we say $G$ has {\it uniform exponential growth}. 
The infimum $e(G)$ is not always attained
among finitely generated groups \cite{W}, or even among 
 finitely presented groups, as recently announced in \cite{SSfp}.

Following \cite{FS}, 
we define the set
$$\xi(G):=\{ e(G,S) \ | \ S\text{ is a finite generating set of }G\}.$$

Let $G$ be a non-elementary word-hyperbolic group. It is known that 
$G$ has exponential growth \cite{GromovHyperbolic} and, moreover, that 
 $e(G)>1$, \cite{K}.
The set $\xi(G)$ is unbounded since $G$ contains a free group of rank two.
In \cite{FS}, the first author and Sela proved that 
$\xi(G)$ is well-ordered in $\mathbb{R}$ and that its
order type, the {\it growth ordinal}, is at least $\omega^{\omega}$.
Here $\omega$ denotes the order type of $\N$ equipped with the standard order, and $\omega^\omega=\sup_{k \in \N}\omega^k$ where $\omega^k$ denotes the ordinal of $\N^k$ equipped with the lexicographic order.
In particular, $\xi(G)$ has a minimum, which answers a question by de la Harpe \cite{dlHarpe2000, dlH}.
We refer the reader to the exposition in \cite{L} of the results of \cite{FS}.

Let $e_\alpha(G)$ denote the $\alpha$-th smallest growth rate of $G$ for $\alpha$ in the growth ordinal.
We have $e(G)=e_1(G)$. 

\subsection{Free groups}
A rank-$d$ free group, $F_d$, for $d\ge 2$
is a standard example of a non-elementary hyperbolic group. It is known that the order type of $\xi(F_d)$ is $\omega^{\omega}$ \cite{FS}. 
It is fairly easy to show that $e_1(F_d)=2d-1$, which is achieved by a free basis \cite{dlHarpe2000, dlH} (see also \cite{S}). 

In this paper, we determine the second and third values in $\xi(F_d)$ for every $d\ge 2$.

\begin{thm}[Small growth rates of $F_d$]\label{thm:main}
Suppose $d\ge2$.
\begin{enumerate}[(1)]
\item\label{eq:main1}
The second smallest growth rate of $F_d$ is
\[
e_2(F_d) = d-1+\sqrt{d^2+4d-4}.
\]

\item\label{eq:main2}
The third smallest growth rate of $F_d$ is
\[
e_3(F_d) = \frac{2d-1+\sqrt{4d^2+12d-15}}{2}.
\]
\end{enumerate}
    
\end{thm}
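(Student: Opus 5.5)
The plan has two halves: exhibit explicit generating sets realizing the two values, and prove that every other generating set has growth rate at least as large. First I record the algebraic form of the targets. The number in (1) is the largest root of $x^2-2(d-1)x-(6d-5)=0$, and the number in (2) is the largest root of $x^2-(2d-1)x-4(d-1)=0$. For $d=2$ these give $1+2\sqrt2\approx 3.83$ and $4$, both strictly between $2d-1=3$ and $2d+1$.

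\textbf{Upper bounds.} I would take a free basis $\{a_1,\dots,a_d\}$ and adjoin one extra element. The natural candidates are $a_1^2$ and $a_1a_2$, and possibly $a_1a_2^{-1}$ or $a_1a_2a_1^{-1}$. For each candidate $S$, I would compute $e(F_d,S)$ exactly. To do this, describe a regular language of geodesic normal forms, such as shortlex representatives relative to an ordering of $S^{\pm1}$, by a finite automaton. The growth rate is then the Perron--Frobenius eigenvalue of the transfer matrix. I expect these matrices to have characteristic polynomials with the two quadratics above as factors, and I would identify which extra generator realizes $e_2$ and which realizes $e_3$. This step is a routine calculation.

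\textbf{Lower bounds.} Let $S$ be any finite generating set that is not a free basis up to inverses, deletion of the identity and repetitions. If $S$ contains a free basis together with at least two further nontrivial elements, then a crude count of freely independent products should already give growth exceeding $e_3$. So the essential case is a generating set of size $d+1$, and more generally a set containing a basis-like part plus one extra element. For these I would use the Magnus marking and its expansion property from the paper: after Nielsen-reducing $S$, mark each generator by a letter of a basis occurring in it. The expansion property should guarantee that, for suitably chosen words over $S$, distinct words of length $n$ map to distinct group elements, with only controlled cancellation. This produces an injection of a regular language into $B_S(n)$ whose growth rate is computed by a transfer matrix, and that rate should be at least the relevant root.

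\textbf{Case split and main obstacle.} The lower bound splits into two statements. First, every non-basis $S$ satisfies $e(F_d,S)\ge e_2$. Second, every $S$ not equivalent to the $e_2$-minimizer satisfies $e(F_d,S)\ge e_3$. For the second statement, I would classify the extra element by the shape of its reduced form relative to the marking: whether it is a proper power of a marked letter, and how many marked letters it involves. For each shape, the injected language has a computable growth rate, and I would show it dominates the target quadratic's root. I expect the main obstacle to be the comparison in the borderline shapes, the extra elements closest to the minimizers. There I would try to prove strict inequality by exhibiting one additional forbidden-pattern-free transition in the automaton, so that the Perron--Frobenius eigenvalue strictly increases, and to make this uniform over all $d\ge 2$.
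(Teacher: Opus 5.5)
Your upper-bound half is fine. The paper gets the same values from the free-product identity $1-1/P_{G*H}=(1-1/P_G)+(1-1/P_H)$ for spherical growth series rather than from automata, and finds that $a_1^2$ gives $e_2$ while both $a_1^3$ and $a_1a_2$ give $e_3$.

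The gap is the lower bound, which is the whole content of the theorem.

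\textbf{The reduction.} Your first step claims that a free basis plus two further elements already exceeds $e_3$ by ``a crude count of freely independent products''. There is no mechanism behind this. In $\{a_1,a_1^2,a_1^3,a_2,\dots,a_d\}$ the extra elements are not free at all. In $\{a_1,a_2,a_1a_2,a_2a_1,a_3,\dots,a_d\}$ the subset $\{a_1,a_2,a_1a_2\}$ already sits exactly at $e_3$, so any count that works must be as sharp as the hard case you postpone. Moreover, $S$ need not contain a free basis of $F_d$ at all, e.g.\ $\{a_1^2,a_1^3,a_1^5,a_2,\dots,a_d\}$.

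The paper's reduction is structural. Pick $U_0\subseteq S$ with $|U_0|=d$ independent in the abelianization; it is then a free basis of $\langle U_0\rangle$. For each $s\in S\setminus U_0$, one of three things happens:
\begin{enumerate}
\item $U_0\cup\{s\}$ freely generates a rank-$(d+1)$ group, giving growth $2d+1$;
\item it generates rank $d$ with no commuting pair, and Proposition~\ref{prop:noncyc} gives $\ge\beta_d=e_3$;
\item $s$ commutes with some $u\in U_0$.
\end{enumerate}
If the last case holds for every $s$, then $S\subseteq\bigcup_{u\in U_0}\langle p_u\rangle$ with $\{p_u\}$ a free basis. Everything then reduces to comparing spherical series of generating sets of $\mathbb{Z}$ (Lemma~\ref{lem:sph-cyc}) via the free-product identity. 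Also, only $\ge e_3$ is needed, and equality occurs in the borderline cases, so your plan to force strict inequality there is misdirected.

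\textbf{The tool.} You may not Nielsen-reduce $S$: Nielsen moves change the generating set and hence the growth rate. A Magnus marking is not a labelling of generators by basis letters. It is a generating set $T$ every proper subset of which freely generates a free subgroup. Here it arises as a minimal $T\subseteq U$ with $\rk\langle T\rangle<|T|$. Then $|T|=t+1$ with $t\ge2$ (because $U$ has no commuting pair), and $F_d=\langle T\rangle*\langle U\setminus T\rangle$ with $U\setminus T$ free.

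Likewise, the expansion property (Lemma~\ref{lem:nonsplit}) is not an injectivity statement for a language of words. It is the isoperimetric inequality $|AT^{\pm}\setminus A|\ge 2(t-1/t)|A|+2(t+1)/t$ for every finite $A$. It is proved by writing $\cay(H,T)$ as the union of the $t+1$ spanning forests whose edges are labelled by $T\setminus\{x_j\}$, each edge lying in exactly $t$ of them, and double counting components. Applied to balls it gives $P_{H,T}(x)\ge\frac{1+(1+2/t)x}{1-(2t-2/t+1)x}$. For $t=2$ this is exactly $P_{F_2,\Delta_2}(x)=\frac{1+2x}{1-4x}$, and the free-product identity then yields $e(F_d,U)\ge\beta_d$ uniformly over all words.

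Your plan to classify the extra element by ``shape'' and compute one transfer matrix per shape faces infinitely many shapes (arbitrary words, arbitrary $2\le t\le d$) with no uniform estimate. As written, it does not prove the lower bound.
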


\begin{rem}\label{rem:main}
    We have $e(F_d,S)=e_2(F_d)$ 
    if and only if, after applying an automorphism of $F_d$ and
    replacing elements in $S$ with their inverses, and choosing exactly one element from $\{s,s^{-1}\}$, and dropping the identity,
    $$S=\{a_1,a_1^2,a_2,a_3,\ldots,a_d\}.$$
    Furthermore, the generating sets 
    $$\{a_1,a_1^3,a_2,a_3,\ldots,a_d\}, \{a_1,a_1a_2,a_2,a_3,\ldots,a_d\}$$
    both realize $e_3(F_d)$. 
    See Remark \ref{rem:eq}.
\end{rem}

\subsection{Magnus marking}

Consider  a one-relator presentation of a group 
$$G=\form{S\;|\;r},$$
 where $r$ is cyclically reduced and every element of $S$, or its inverse, appears in $r$. 

Magnus' Freiheitssatz \cite{Magnus} states that
every proper subset of $S$ freely generates a free subgroup of $G$.
Motivated by this, we call a finite generating set $S$ of a group $G$ a {\it Magnus marking} if every proper subset of $S$ freely generates a free subgroup.

A crucial step in the proof of Theorem \ref{thm:main} is to find a lower bound for the growth rate $e(F_d, T)$, where $T$ is a Magnus marking with $|T|=d+1$. The following lemma is the key ingredient. 

\begin{lem}[Expansion property. Lemma \ref{lem:nonsplit}]\label{lem:nonsplit_intro}
If $T$ is a Magnus marking of a group $H$ with
$t:=|T|-1\ge2$, 
then for every finite subset $A\subset H$ we have 
\begin{equation}
|A(T\cup T^{-1})\setminus A|
\ge 2\left(t-\frac{1}{t}\right)|A|+\frac{2(t+1)}{t}.
\end{equation}
\end{lem}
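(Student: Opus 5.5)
The inequality follows by averaging the tree isoperimetric inequality over the $t+1$ free subsets $T_i:=T\setminus\{s_i\}$ of $T=\{s_0,\dots,s_t\}$, and then correcting the overcount with an edge count in the full Cayley graph. Write $B:=A(T\cup T^{-1})\setminus A$ and $B_i:=A(T_i\cup T_i^{-1})\setminus A$. Since $T$ is a Magnus marking, each $T_i$ freely generates a free group $H_i\le H$ of rank $t$. So the Cayley graph $\Gamma_i$ of $H$ with respect to $T_i$ is a disjoint union of $2t$-regular trees, one for each coset of $H_i$.

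\emph{Step 1 (tree bound for each $i$).} Let $e_i(A)$ be the number of $T_i$-labelled edges with both endpoints in $A$. The number of $T_i$-edges incident to $A$ is $2t|A|-e_i(A)$. These edges span a subgraph of $\Gamma_i$ with vertex set $A\cup B_i$, and this subgraph is a forest with at least one component. Hence
\[
2t|A|-e_i(A)\le |A|+|B_i|-1,\qquad\text{so}\qquad |B_i|\ge (2t-1)|A|-e_i(A)+1 .
\]
Summing over $i$, and using that each edge of the full Cayley graph $\Gamma$ with respect to $T$ carries one label and so lies in exactly $t$ of the $\Gamma_i$, gives $\sum_i e_i(A)=t\,e(A)$. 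Here $e(A)$ is the number of $T$-edges inside $A$. Therefore
\[
\sum_{i=0}^t|B_i|\ \ge\ (t+1)(2t-1)|A|-t\,e(A)+t+1 .
\]

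\emph{Step 2 (multiplicities).} For $x\in B$, let $m(x)$ be the number of $i$ with $x\in B_i$. If every edge from $x$ into $A$ carries the same label $s_j$, then $x\in B_i$ exactly for $i\ne j$, so $m(x)=t$. If $x$ is joined to $A$ by edges with at least two distinct labels, then $m(x)=t+1$. Let $N$ be the number of such $x$. Then $\sum_i|B_i|=t|B|+N$. Each such $x$ has at least two edges into $A$. The total number of $T$-edges from $A$ to $B$ is $2(t+1)|A|-2e(A)$, so
\[
2(t+1)|A|-2e(A)\ \ge\ |B|+N .
\]
This gives the upper bound $e(A)\le \tfrac12\bigl(2(t+1)|A|-|B|-N\bigr)$.

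\emph{Step 3 (combine).} Insert the upper bound on $e(A)$ into the inequality of Step 1, whose right-hand side contains $-t\,e(A)$. This yields
\[
t|B|+N\ \ge\ (t+1)(2t-1)|A|-t(t+1)|A|+\tfrac t2(|B|+N)+t+1,
\]
that is,
\[
\tfrac t2|B|+\bigl(1-\tfrac t2\bigr)N\ \ge\ (t^2-1)|A|+t+1 .
\]
Since $t\ge2$, the term $(1-\tfrac t2)N$ is nonpositive and can be dropped. Dividing by $t/2$ gives $|B|\ge 2(t-\tfrac1t)|A|+\tfrac{2(t+1)}t$, which is the claimed bound.

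The main obstacle is Step 2. A naive averaging, assuming each boundary point lies in at most $t$ of the $B_i$, gives the bound at once, since $\tfrac{t+1}{t}\bigl((2t-2)|A|+2\bigr)$ is exactly the target. Points reached through two different generators break that assumption. The fix is to keep the internal-edge term $e(A)$ in Step 1 rather than discarding it. Trading it against the global edge count lets the excess $N$ enter with the favourable coefficient $1-t/2\le 0$, and this is where the hypothesis $t\ge2$ is used.
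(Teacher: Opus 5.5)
Your proof is correct and is essentially the paper's argument. Your Step 1 bound $|B_i|\ge(2t-1)|A|-e_i(A)+1$ is exactly what one gets by adding the paper's two forest counts (for $\Gamma_j[A]$ and for the contracted forest $\hat\Gamma_j'$, which eliminates $c_j(A)$). Your Step 2 inequality $|B|+N\le|\partial_E A|$ is the summed form of the paper's per-vertex bound $\sum_j\max(d_j(v)-1,0)\ge(t-1)(k(v)-1)$.

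The only difference is the final elimination. The paper writes the count through $C(A)=(t+1)|A|-t\,e(A)$ and bounds the term $\bigl(\frac2t-\frac1{t-1}\bigr)C(A)$ below using $C(A)\ge t+1$. You instead eliminate $e(A)$ with the global edge count and discard $N\ge0$, whose coefficient is $1-\frac t2$. In each version the needed sign condition is exactly $t\ge2$, and both arguments tacitly assume $A\neq\emptyset$.
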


In particular, 
$$|A(T\cup T^{-1})\setminus A|\ge 2\left(t-\frac{1}{t}\right) |A|+2.$$
Taking $A=B_T(n)$, we have $|B_T(n+1)| \ge 2(t-1/t)|B_T(n)|$ for every $n\ge1 $. 
This implies that 
\begin{equation}\label{eq:growth}
    e(G,T) \ge 1+2\left(t-\frac{1}{t}\right).
\end{equation}

\begin{exmp}
Let $F_2$ be the rank-$2$ free group freely generated by $\{a,b\}$. The generating set $\{a,ab,b\}$
is a Magnus marking with $t=2$, but $\{a,a^2,b\}$ is not. 
By (\ref{eq:growth}), we have 
$e(F_2, \{a,ab,b\}) \ge 4$.

A direct computation gives  $e(F_2, \{a,ab,b\})=4$ and $e(F_2, \{a,a^2,b\})=1+2\sqrt{2}$.
By Theorem \ref{thm:main}, the second and third smallest growth rates of $F_2$ are $1+2\sqrt{2}$ and $4$, respectively. 
\end{exmp}

\subsection{One-relator groups and surface groups}
Lemma \ref{lem:nonsplit_intro} has the following corollary. 
\begin{cor}\label{cor:Freiheitssatz}
Let  
$$ G = \form{S \;|\; r},$$
be a one-relator group presentation,
where $r$ is a cyclically reduced word in $S$,
and for every $s$, either  $s$ or $s^{-1}$ appears in $r$. 
Suppose $|S|\ge 3$. 
Then, $$e(G,S)\ge 2|S|-1-\frac{2}{|S|-1}.$$
\end{cor}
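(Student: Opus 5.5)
The plan is to show that $S$ is a Magnus marking of $G$, apply the Expansion property (Lemma \ref{lem:nonsplit_intro}) with $H=G$, $T=S$ and $t=|S|-1\ge 2$, and then turn the resulting isoperimetric inequality into a bound on the growth rate.

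\emph{Step 1: $S$ is a Magnus marking.} By hypothesis, $r$ is cyclically reduced and every letter of $S$ (or its inverse) occurs in $r$. Magnus' Freiheitssatz \cite{Magnus} then says that every proper subset $S'\subsetneq S$ freely generates a free subgroup of $G$. Two small points need care. First, the elements of $S$ should be distinct and nontrivial in $G$; this follows from the Freiheitssatz applied to the subsets $\{s\}$ and $\{s,s'\}$, which generate free groups of rank $1$ and $2$. Second, we need $|S|\ge 3$, which gives $t=|S|-1\ge2$ as Lemma \ref{lem:nonsplit_intro} requires.

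\emph{Step 2: expansion of balls.} Take $A=B_S(n)$. Then $A(S\cup S^{-1})\setminus A = B_S(n+1)\setminus B_S(n)$, so Lemma \ref{lem:nonsplit_intro} gives
\[
|B_S(n+1)| \ge \Bigl(1+2\bigl(t-\tfrac1t\bigr)\Bigr)|B_S(n)|
\]
for every $n\ge 0$. Iterating yields $|B_S(n)|\ge c\,\lambda^n$ with $\lambda = 1+2t-2/t$. Taking $n$-th roots gives $e(G,S)\ge\lambda$, which is exactly (\ref{eq:growth}).

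\emph{Step 3: substitution.} With $t=|S|-1$ we get $1+2(|S|-1)-\tfrac{2}{|S|-1}=2|S|-1-\tfrac{2}{|S|-1}$, which is the claimed bound.

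The only real content is Step 1, namely checking that the Freiheitssatz hypotheses match the definition of a Magnus marking, including the distinctness and nontriviality of the generators. Everything after that is a direct application of Lemma \ref{lem:nonsplit_intro}.
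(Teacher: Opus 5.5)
Your proposal is correct and follows the paper's proof: the Freiheitssatz makes $S$ a Magnus marking, and Lemma \ref{lem:nonsplit_intro} applied to the balls $B_S(n)$ gives $e(G,S)\ge 1+2(t-1/t)$ with $t=|S|-1$. Your extra check that the generators are nontrivial and pairwise distinct (via the Freiheitssatz on one- and two-element subsets, which uses $|S|\ge 3$) is a detail the paper leaves implicit.
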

Gromov previously observed, using Magnus' Freiheitssatz, that 
$$e(G,S) \ge 2|S|-3,$$
see \cite[p.\ 282, after Conjecture 5.14]{Gromov}.
\begin{proof}
By Magnus' Freiheitssatz, 
every proper subset of $S$ is a free basis. Applying Lemma \ref{lem:nonsplit_intro} to 
a radius-$n$ ball $A$, 
we obtain the inequality.
\end{proof}

For example, let $\Gamma_g$ denote the fundamental group of a closed orientable surface $\Sigma_g$ of genus $g$.
Consider the following  presentation with 
the standard generating set
$S=\{a_1, b_1, \dots, a_g,b_g\}$:
$$\Gamma_g=\form{a_1,b_1,\ldots, a_g, b_g\;|\; [a_1,b_1]\cdots[a_g,b_g]}.$$ Suppose $g \ge 2$. 
Since $|S|=2g$, Corollary \ref{cor:Freiheitssatz} yields 
$$e(\Gamma_g,\{a_1, b_1, \ldots, a_g, b_g\})\ge 4g-1-\frac{2}{2g-1}.$$

The surface group $\Gamma_g$ with $g \ge2$ is  a non-elementary hyperbolic group. It is unknown which generating
set realizes the minimal growth rate $e(\Gamma_g)$.
A more refined argument yields the following.

\begin{thm}\label{thm:surface}
Let $\Gamma_g$ denote the fundamental group of a closed orientable surface $\Sigma_g$ of genus $g$.
For every $g\ge 2$, we have
\[
e(\Gamma_g) \ge 4g-1-\frac{2}{2g-1}.
\]
\end{thm}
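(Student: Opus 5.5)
We need a lower bound for $e(\Gamma_g,S)$ that holds for every finite generating set $S$. The plan is to reduce to generating sets of small size and then apply the expansion property, Lemma \ref{lem:nonsplit_intro}, to a suitable Magnus-marking subset of $S$.

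\textbf{Large generating sets.} Take $S$ symmetric-free: choose one element from each pair $\{s,s^{-1}\}$ and drop the identity. Suppose $|S|=k$.

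If $k$ is large, we look for a Magnus marking inside $S$. Any subset $T\subset S$ generates a subgroup $\langle T\rangle\le\Gamma_g$, and the growth of $\Gamma_g$ with respect to $S$ dominates the growth of $\langle T\rangle$ with respect to $T$. So it suffices to find $T\subset S$ with $|T|=t+1$ that is a Magnus marking of $\langle T\rangle$ and satisfies
$$1+2\left(t-\frac1t\right)\ge 4g-1-\frac{2}{2g-1}.$$
With $t=2g-1$ we get exactly $1+2(2g-1)-\frac{2}{2g-1}=4g-1-\frac{2}{2g-1}$. So the goal becomes: find $2g$ elements of $S$ forming a Magnus marking of the subgroup they generate, and then apply inequality \eqref{eq:growth} to that subgroup.

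\textbf{Small generating sets.} If $|S|\le 2g-1$ this is impossible, but then we argue differently.
\begin{itemize}
\item $|S|<2g$ cannot happen at all, since $H_1(\Gamma_g)=\Z^{2g}$ requires at least $2g$ generators.
\item If $|S|=2g$, then by the result of Zieschang that generating sets of minimal cardinality are Nielsen equivalent to the standard one, every proper subset of $S$ freely generates a free subgroup. The reason is that every $(2g-1)$-element subset generates an infinite-index subgroup of a surface group, which is therefore free, and its rank is forced to be $2g-1$ by Hopficity of free groups. So $S$ itself is a Magnus marking with $t=2g-1$, and inequality \eqref{eq:growth} finishes this case.
\end{itemize}

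\textbf{The main obstacle.} For $|S|>2g$ we must extract a Magnus-marking subset of size $2g$. I would try the following. Choose $T\subset S$ of size $2g$ whose image in $H_1(\Gamma_g;\mathbb{Q})$ is a basis; this is possible because $S$ generates $H_1$. Then $\langle T\rangle$ has finite index, since its image has full rank in homology, or we may replace it by a finite-index cover argument.

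The claim to establish is that any $2g-1$ elements of $T$ generate a subgroup of infinite index in $\Gamma_g$. That subgroup is then free, since infinite-index subgroups of surface groups are free. Its rank is $2g-1$ because the abelianization image already has rank $2g-1$, so the free group it generates has rank at least $2g-1$ and these elements form a free basis.

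If finite index fails, $\langle T\rangle$ is still a group in which every proper subset of $T$ is a free basis, which is all Lemma \ref{lem:nonsplit_intro} needs. The remaining issue is the infinite-index claim for $(2g-1)$-subsets. I would handle it via the Euler characteristic: a finite-index subgroup of index $m$ has rank $2m(g-1)+2\ge 2g$, so a subgroup generated by $2g-1$ elements cannot have finite index. This gives the bound for every $S$, hence for $e(\Gamma_g)$.
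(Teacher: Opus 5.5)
Your proof is correct and is essentially the paper's argument. Both pick $2g$ elements of $S\cup S^{-1}$ forming a $\mathbb{Q}$-basis of $H_1(\Gamma_g)$, use the rank $2+2N(g-1)\ge 2g$ of index-$N$ subgroups to show that every proper subset generates an infinite-index, hence free, subgroup of which it is a free basis, and then apply Lemma~\ref{lem:nonsplit} with $t=2g-1$. The one difference is that the paper first uses $e(\Gamma_g,S)<4g-1$ to show these elements generate all of $\Gamma_g$ (Lemma~\ref{lem:surface}); you rightly bypass this by applying the lemma to $(\langle T\rangle,T)$ and using $e(\Gamma_g,S)\ge e(\langle T\rangle,T)$.

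Your aside that $\langle T\rangle$ has finite index because its homology image has full rank is false. For example, $\{a_1^2,b_1,a_2,\dots,b_g\}$ generates an infinite-index free subgroup. The appeal to Zieschang and the separate $|S|=2g$ case are also unnecessary. None of these is used in your argument, so the proof stands.
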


We have $e(\Gamma_g) \le 4g-1$ since it is generated by $2g$ elements. 
The above lower bound improves the previuosly known lower bound $4g-3$ \cite[Proposition VII.15]{dlHarpe2000} (see also \cite{GdlH}).

It is a well-known question whether the standard 
generating set realizes 
$e(\Gamma_g)$ \cite[Open problem VII.16]{dlHarpe2000}. 
We view the above theorem as supporting evidence for  a positive answer. 
We show Theorem \ref{thm:surface} in Section \ref{sec:surface}.

\subsection{Outline of the proofs}\label{sec:outline}
We briefly describe the proof strategy and state the key propositions used in the proofs of Theorem \ref{thm:main}.
Consider an arbitrary finite generating set $S$ of $F_d$ for $d\ge 2$. 
The proofs naturally split into the following two cases. 
\begin{enumerate}[(A)]
    \item\label{p:cyc} (Cyclically Splitting) We have a splitting $F_d=\mathbb{Z}\ast\cdots\ast\mathbb{Z}$ so that each generator in $S$ belongs to a free factor.
    \item\label{p:noncyc} (Cyclically Non-splitting Factor) $S$ contains a subset $U$ of cardinality $d+1$ generating a rank-$d$ free group, such that no two distinct elements in $U$ commute each other.
\end{enumerate}

In the cyclically splitting case \eqref{p:cyc}, the key proposition is the following.
Fix a free basis $S_d=\{a_1,\ldots,a_d\}$ for $F_d$.
For a pair of relative primes $1\le p<q$, let
$$\alpha_d(p,q):=e(F_d,\{a_1^p,a_1^q, a_2,\ldots,a_d\}).$$

We also  define
$$\beta_d:=e(F_d,\{a_1, a_1a_2, a_2, \ldots,a_d\}).$$

\begin{prop}\label{prop:cyclic}
For all $d\ge2$, we have:
\begin{enumerate}
    \item\label{p:12} $\alpha_d(1, 2)$ is the positive root of the equation $x^2-(2d-2)x-6d+5=0$, i.e.,
    \[
    \alpha_d(1,2)=d-1+\sqrt{d^2+4d-4},
    \]
    \item\label{p:13} $\alpha_d(1, 3)$ is the positive root of the equation $x^2-(2d-1)x-4d+4=0$, i.e.,
    \[
    \alpha_d(1,3) = \frac{2d-1+\sqrt{4d^2+12d-15}}{2}.
    \]
    \item\label{p:pq}
    For every pair of relative primes $1\le p<q$ satisfying $(p,q)\ne(1,2)$, we have
    $$\alpha_d(1,2)<\alpha_d(1,3)\le \alpha_d(p,q).$$
    \item\label{p:b}
    We have $\beta_d=\alpha_d(1, 3)$.
\end{enumerate}
\end{prop}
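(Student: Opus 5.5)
The plan is to compute every growth rate here from the structure of $F_d$ as a free product. With $S=\{a_1^p,a_1^q,a_2,\dots,a_d\}$ we have $F_d=\langle a_1\rangle *\langle a_2\rangle*\cdots*\langle a_d\rangle$, and each generator lies in one factor. The word metric is therefore the free-product metric built from the factor metrics: $\Z$ with generators $\{\pm p,\pm q\}$ for the first factor, and $\Z$ with $\{\pm1\}$ for the others. Writing $f_i(z)=\sum_{g\ne 1}z^{|g|}$ for the reduced growth series of the $i$-th factor, the standard free product formula gives the growth series of $F_d$ as
\[
\Big(1-\sum_i \frac{f_i}{1+f_i}\Big)^{-1}.
\]
The growth rate is then $1/R$, where $R$ is the smallest positive root of $\sum_i f_i/(1+f_i)=1$. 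For the cyclic factors with $\{\pm1\}$ we have $f=2z/(1-z)$, so $f/(1+f)=2z/(1+z)$.

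For (1) and (2) we need $f$ for $\Z$ with generators $\{\pm1,\pm m\}$, $m=2,3$. The sphere sizes are easy to write down. For $m=2$ they are $|S(n)|=4$ for $n\ge1$: the elements of length $n$ are $\pm(2n-1),\pm 2n$. For $m=3$ they are $|S(1)|=4$ and $|S(n)|=6$ for $n\ge2$. Substituting these into $\frac{f}{1+f}+(d-1)\frac{2z}{1+z}=1$ and clearing denominators should give quadratics in $x=1/z$, which I will match with $x^2-(2d-2)x-6d+5$ and $x^2-(2d-1)x-4d+4$. As a sanity check, $d=2$, $m=2$ gives $1+2\sqrt2$, agreeing with the Example.

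For (4), $\{a_1,a_1a_2,a_2,\dots\}$ is not a factorwise set for this splitting, so I would change splitting. Put $b=a_1a_2$. Then $\{a_1,a_2,b\}$ generates $F_2=\langle a_1,a_2\rangle$, which is a free factor, and $F_d=F_2*\langle a_3\rangle*\cdots$. So it suffices to compute the growth series of $F_2$ with respect to $\{a_1,a_2,a_1a_2\}$ and feed it into the same formula. My first attempt would be to count geodesic normal forms via a finite-state automaton: in reduced words, substrings $a_1a_2$ can be compressed, so the geodesic length is the number of $a_1^{\pm}$ and $a_2^{\pm}$ letters minus the number of disjoint $a_1a_2$ (and $a_2^{-1}a_1^{-1}$) blocks. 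This yields a rational $f_{F_2}$. The cleaner alternative is to observe that the quadratic for $\beta_d$ coincides with that for $\alpha_d(1,3)$, for example by checking that the reduced growth series $f/(1+f)$ agrees with that of $\Z*\Z$ under $\{a_1^{\pm1},a_1^{\pm3}\}$ together with the standard basis on the second factor. So I would compute both and compare.

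For (3), the key monotonicity is that $f/(1+f)$ is increasing in $f$ coefficientwise, so a factor with larger sphere sizes gives a larger growth rate. For general coprime $p<q$ with $(p,q)\ne(1,2)$, I would show that the sphere counts of $\Z$ under $\{\pm p,\pm q\}$ dominate those of $\{\pm1,\pm3\}$ termwise, that is $|S(1)|=4$ and $|S(n)|\ge6$ for $n\ge2$, or at least that the cumulative ball sizes dominate in a way that suffices to compare the smallest roots. This holds because $\pm p\pm q$, $\pm 2p$ and $\pm 2q$ give at least six distinct nonzero elements of length $2$ unless $q=2p$, which coprimality excludes except for $(1,2)$. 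For $n\ge2$, I would argue that the number of elements of length exactly $n$ in a two-generator lattice-projected ball grows at least like that of $(1,3)$. This is the main obstacle: spheres in $\Z$ with general $\{p,q\}$ are not monotone in an obvious way, since a large $q$ makes the spheres large, but I need an exact lower bound of $6$. I would handle it by exhibiting, for each $n\ge2$, six explicit elements $\pm(n-1)q\pm p$-type or $\pm nq$, $\pm((n-1)q+p)$, $\pm((n-1)q-p)$ whose length is exactly $n$, checked via the lower bound $|kp+lq|\ge$ comparison. The strict inequality $\alpha_d(1,2)<\alpha_d(1,3)$ then follows from the explicit formulas.
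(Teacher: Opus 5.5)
Your overall route is the paper's. You use the free-product identity $1-1/P=\sum_i(1-1/P_i)$ and the explicit sphere counts for $\{1,2\}$ and $\{1,3\}$ for (1)--(2). For (4), your observation that $1-1/P_{F_2,\Delta_2}=6x/(1+2x)$ equals $(1-1/P_{\mathbb{Z},\{1,3\}})+(1-1/P_{\mathbb{Z},\{1\}})$ is exactly the paper's identity. For (3), you use the termwise bound $|S(n)|\ge 6$ compared pointwise on $(0,1)$; note that the monotonicity of $f/(1+f)$ is pointwise in $z$, not coefficientwise.

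The gap is in the step you yourself flagged as the main obstacle. Your third pair of witnesses, $\pm((n-1)q-p)$, does not have length $n$ in general.
\begin{itemize}
\item It fails when $p<q<2p$. For $\{2,3\}$, $(n-1)\cdot 3-2=3(n-3)+2+2$ has length at most $n-1$ for every $n\ge 3$; already at $n=3$ you get $4=2+2$. For $\{3,4\}$ and $n=4$, $9=3+3+3$.
\item Even where the claim is true, a size comparison cannot prove it. Since $(n-1)q-p<(n-1)q$, magnitude alone does not exclude it from the ball of radius $n-1$.
\end{itemize}

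The fix is the paper's case split; note $q=2p$ happens only for $(1,2)$.
\begin{itemize}
\item If $q<2p$, use $(n-2)q+2p$ instead. It exceeds $(n-1)q$, so it lies outside $B(n-1)\subseteq[-(n-1)q,(n-1)q]$ and has length exactly $n$. It is also distinct from $nq$ and $(n-1)q+p$.
\item If $q>2p$, then $(n-1)q-p$ does have length $n$, but the proof needs coprimality. Suppose $(n-1)q-p=iq+jp$ with $|i|+|j|\le n-1$. Then $(n-1-i)q=(j+1)p$, so $j+1=kq$ and $n-1-i=kp$ for some $k\in\mathbb{Z}$. Each case contradicts $|i|+|j|\le n-1$:
 \begin{itemize}
 \item $k=0$ gives $|i|+|j|=n$.
 \item $k<0$ forces $i>n-1$.
 \item $k>0$ with $kp\le n-1$ gives $|i|+|j|=n-2+k(q-p)\ge n$; this is where $q\ge 2p+1$ is used.
 \item $k>0$ with $kp>n-1$ gives $|i|+|j|=k(p+q)-n>n$.
 \end{itemize}
\end{itemize}
With these corrected witnesses and your (correct) length-$2$ count, you get $P_{\mathbb{Z},\{p,q\}}\ge P_{\mathbb{Z},\{1,3\}}$ on $(0,1)$, and (3) follows as you intended.
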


In the cyclically nonsplitting factor case \eqref{p:noncyc}, 
the key proposition is the following. 
Its proof is based on Lemma \ref{lem:nonsplit_intro}.

\begin{prop}\label{prop:noncyc}
Let $d\ge2$. 
If $U$ is a finite generating set of $F_d$ with cardinality $d+1$
that does not contain a commuting pair, 
then we have
\[
e(F_d, U) \ge \beta_d.
\]

\end{prop}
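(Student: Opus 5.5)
The plan is to split according to whether $U$ is itself a Magnus marking of $F_d$, and in the non-Magnus case to locate a Magnus-marked free factor and use the free-product structure.

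\emph{Case 1: $U$ is a Magnus marking.} Lemma \ref{lem:nonsplit_intro} applies with $t=d\ge 2$, and \eqref{eq:growth} gives $e(F_d,U)\ge 1+2(d-1/d)$. It remains to check $1+2d-2/d\ge \beta_d=\tfrac12\bigl(2d-1+\sqrt{4d^2+12d-15}\bigr)$, using Proposition \ref{prop:cyclic}\eqref{p:b} for the value of $\beta_d$. This is equivalent to $\sqrt{4d^2+12d-15}\le 2d+3-4/d$. Squaring reduces it to $8\ge 24/d-16/d^2$, which holds with equality at $d=2$ (both sides equal $4$) and strictly for $d\ge3$.

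\emph{Case 2: $U$ is not a Magnus marking.} Choose a proper subset $V\subsetneq U$ that does not freely generate a free group, with $|V|$ minimal. Then every proper subset of $V$ is a free basis of the subgroup it generates. The subgroup $H=\langle V\rangle$ is free, and since $V$ is not a basis, Hopficity forces $\rk H\le |V|-1$. By minimality, $\rk H=|V|-1=:t$, so $V$ is a Magnus marking of $H$. If $|V|=2$, then $V$ would generate a cyclic group, so its two elements commute, which is excluded. Hence $t\ge2$.

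Put $W=U\setminus V$, so $|W|=d-t$. The natural map $H\ast F(W)\to F_d$ is a surjection between free groups of rank $d$, hence an isomorphism. Thus $F_d=H\ast\langle W\rangle$, with $W$ a free basis of the second factor and $U=V\sqcup W$ adapted to this splitting.

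To bound the growth in Case 2, use the free product formula for spherical growth series,
\[
\frac{1}{f_{U}}-1=\Bigl(\frac{1}{f_{H,V}}-1\Bigr)+\Bigl(\frac{1}{f_{F(W),W}}-1\Bigr).
\]
Here $e(F_d,U)$ is the reciprocal of the radius of convergence. Because $1/f-1$ is monotone in coefficientwise domination of series with positive coefficients (near the relevant radius), it suffices to show that $f_{H,V}$ dominates, up to the radius-of-convergence comparison, the growth series of a model set. The natural model is $\{a,ab,b\}$ in $F_t$; more generally, the model is whatever set with $t+1$ elements yields $\beta$. The free product with $\langle W\rangle$ then produces exactly the generating set $\{a_1,a_1a_2,a_2,\dots,a_d\}$ in the extremal case $t=2$, whose rate is $\beta_d$. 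For $t\ge3$, one expects a strictly larger rate, which can again be verified through the free product formula.

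The main obstacle is this comparison step. A crude bound is not enough: the estimate $|A U^{\pm}\setminus A|\ge \bigl(2(t-1/t)+2(d-t)\bigr)|A|$ only gives $e\ge 2d+1-2/t$. For $t=2$ this is $2d$, which is below $\beta_d$ once $d\ge3$. I would therefore try first to extract from the sharper form of Lemma \ref{lem:nonsplit_intro}, including its additive constant $2(t+1)/t$ and applied to $A=B_V(n)$, a recursive lower bound for the ball sizes $|B_V(n)|$. The aim is to show these ball sizes dominate those of $\{a,ab,b\}$ in $F_2$, which satisfy $|B(n+1)|= 3|B(n)|+\dots$, realizing growth rate $4$ with equality in the Lemma. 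Then I would pass this domination through the free product formula. If termwise domination fails, the fallback is to compare radii of convergence directly: evaluate $1/f_{H,V}-1$ at $x=1/\beta_d$ using only the ball inequalities, and show that the right-hand side of the displayed formula is nonpositive there.
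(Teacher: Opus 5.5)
\textbf{Verdict: genuine gap.} Your reduction in Case~2 is correct and is the paper's own: a minimal non-basis subset $V$ (the paper's $T$), $t\ge2$ from the no-commuting-pair hypothesis, the splitting $F_d=\langle V\rangle*\langle W\rangle$, and the free-product formula. Your Case~1 check $1+2d-2/d\ge\beta_d$ is also correct, although the general argument with $t=d$ already covers it.

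One small repair is needed. The equality $\rk H=|V|-1$ is not justified by minimality alone, because rank is not monotone under inclusion in free groups. It follows instead from the global count: if $\rk H\le|V|-2$, then $H*F(W)$ would be a free group of rank $<d$ surjecting onto $F_d$. This is how the paper gets $|T|=t+1$.

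The gap is that Case~2 stops exactly where the proof has content. You say you \emph{would try} to use the additive constant, and that for $t\ge3$ \emph{one expects} a larger rate, but neither step is carried out. Four things are needed.
\begin{enumerate}
\item Apply Lemma~\ref{lem:nonsplit} to $A=B_V(n)$ and induct to get $|S_V(n)|\ge 2(t+1)\lambda^{n-1}$ with $\lambda=2t+1-2/t$. The constant $2(t+1)/t$ is precisely what closes the induction, since $(\lambda-1)\bigl(1+2(t+1)\frac{\lambda^n-1}{\lambda-1}\bigr)+\frac{2(t+1)}{t}=2(t+1)\lambda^n$. For $t=2$ this bound is $6\cdot4^{n-1}$, exactly the sphere sizes of $\{a_1,a_2,a_1a_2\}$, so your stated aim is attainable there.
\item The sphere bound gives $1-1/P_{H,V}(x)\ge\frac{2(t+1)x}{1+(1+2/t)x}$.
\item For $t\ge3$, dominating $\{a_1,a_2,a_1a_2\}$ is the wrong comparison, because the free complement drops from rank $d-2$ to $d-t$. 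You must show the trade-off favours larger $t$, namely that on $(0,1)$
\[
\frac{2(t+1)x}{1+(1+2/t)x}+\frac{2(d-t)x}{1+x}-\frac{6x}{1+2x}-\frac{2(d-2)x}{1+x}=\frac{2(t-2)(1-x)x^2}{(1+x)(1+2x)\bigl((t+2)x+t\bigr)}\ge0 .
\]
\item Since $\frac{6x}{1+2x}+\frac{2(d-2)x}{1+x}=1$ at $x=1/\beta_d$, the free-product formula then forces the radius of convergence for $U$ to be at most $1/\beta_d$, that is, $e(F_d,U)\ge\beta_d$.
\end{enumerate}
Without these steps, what you have is a correct plan rather than a proof.
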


Combining Propositions \ref{prop:cyclic} and \ref{prop:noncyc},
we obtain Theorem \ref{thm:main}  in Section \ref{sec:gaps}.
We prove Proposition \ref{prop:cyclic} in 
Section \ref{sec:spherical}, and 
Proposition \ref{prop:noncyc} in Section \ref{sec:just-non-free}.
Our work suggests several natural questions. We discuss some of them in Section \ref{sec:que}.

\subsection{The use of AI in this paper}
Lemma \ref{lem:nonsplit_intro} is one of the key steps in the proof of Theorem~\ref{thm:main}. The formulation of this lemma and the main ideas of its proof
were developed through  our extensive discussions with the AI model, ChatGPT 5.5 Pro by OpenAI.
The authors then refined and generalized the arguments to ensure rigor and completeness, providing additional details and computations. 
We emphasize that the paper was written entirely by the authors.

\subsection*{Acknowledgments}
The authors would like to thank Martin Bridson, Mark Lackenby, Zlil Sela, and Yasushi Yamashita for many helpful discussions and suggestions. The first author would like to thank the Mathematical Insitute of the University of Oxford for its hospitality. 

While preparing this manuscript, we learned that  Carl-Fredrik Nyberg Brodda  had independently determined $e_2(F_2)$ and had been
working on $e_2(F_d)$. We are grateful to  him for sharing his manuscript with us, \cite{NB}.

\section{Spherical growth series}\label{sec:spherical}
Let $G$ be a group with a finite generating set $S$. 
We denote by $|g|_S$ the word length of $g\in G$ with respect to the symmetrized generating set $S\cup S^{-1}$. 
Define the Cayley graph $\cay(G, S)$ for $(G, S)$ by multiplying a generator from right, where every directed edge is labeled by $s$ or $s^{-1}$ for $s\in S$.
The {\it spherical growth series} of the pair $(G,S)$ is the infinite formal series defined by
$$P_{G,S}(x):=\sum_{g\in G} x^{|g|_S} = 1 + \sum_{n\ge1} |B_S(n)\setminus B_S(n-1)|x^n.$$
For the power series $P_{G, S}(x)$, we denote by $\radconv_{G, S}$ the radius of convergence. 
Let us note the following standard facts regarding $P_{G,S}$.
\begin{lem}\label{lem:spherical-free}
Let $G$ be a group with a finite generating set $S$.
\begin{enumerate}
\item\label{p:free-prod1} 
$\radconv_{G, S}>0$ and $e(G,S)=1/\radconv_{G, S}$.
\item\label{p:free-prod}
Let $G$ and $H$ be groups with finite generating sets $S$ and $T$, respectively. 
Then, we have
$$
1-\frac{1}{P_{G\ast H,S\sqcup T}(x)}
=
1-\frac{1}{P_{G,S}(x)}
+1-
\frac{1}{P_{H,T}(x)}.$$
\item\label{p:phi}
Under the hypothesis of (\ref{p:free-prod}), let
$$\Phi(x):=\frac{1}{P_{G,S}(x)}+\frac{1}{P_{H,T}(x)}.$$
If $\rho$ is the smallest root of the equation $\Phi(\rho)=1$ satisfying
$$0 < \rho < \rho_0:=\min \left\{\radconv_{G,S},\radconv_{H,T}\right\},$$ then $e(G\ast H, S\sqcup T)=1/\rho$.
\end{enumerate}    
\end{lem}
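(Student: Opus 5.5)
For part (1) I will use submultiplicativity. Since $|B_S(m+n)|\le |B_S(m)||B_S(n)|$, Fekete's lemma shows that $e(G,S)=\lim_n |B_S(n)|^{1/n}$ exists and lies in $[1,2|S|-1]$. The sphere sizes satisfy $|B_S(n)\setminus B_S(n-1)|\le |B_S(n)|$. There is also a lower bound: every element of the $n$-sphere with $n\ge1$ is a neighbour of an element of the $(n-1)$-sphere, and every element of the $n$-ball lies in some sphere of radius at most $n$. These give
$$\limsup_n |B_S(n)\setminus B_S(n-1)|^{1/n}=\lim_n |B_S(n)|^{1/n}.$$
The inequality $\le$ is immediate. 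For $\ge$, note that $|B_S(n)|\le (n+1)\max_{k\le n}|S_k|$, where $S_k$ denotes the $k$-sphere, and the factor $n+1$ is subexponential. Cauchy--Hadamard then gives $\radconv_{G,S}=1/e(G,S)$, and this is positive because $e(G,S)\le 2|S|-1<\infty$. One degenerate case needs a remark: if $G$ is finite, the series is a polynomial, so $\radconv=\infty$ while $e(G,S)=1$. The statement should therefore be read for infinite $G$, or with the convention $1/\infty=0$ adjusted.

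For part (2) I will use normal forms in the free product. Every nontrivial $g\in G\ast H$ has a unique reduced form $g=g_1g_2\cdots g_k$ with $g_i\ne 1$ alternating between $G$ and $H$. Moreover, $|g|_{S\sqcup T}=\sum_i |g_i|$, where each length is measured in its own factor. The bound $\le$ is clear. For $\ge$, take a geodesic word; grouping its consecutive letters from the same factor produces a (possibly non-reduced) product, and reducing it only shortens the word. Set $P_G^\ast=P_{G,S}-1$ and $P_H^\ast=P_{H,T}-1$. Counting alternating sequences then gives
$$P_{G\ast H}=1+\sum_{k\ge1}\bigl(\text{alternating products of }k\text{ terms}\bigr)=\frac{(1+P_G^\ast)(1+P_H^\ast)}{1-P_G^\ast P_H^\ast}$$
as formal power series. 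Substituting $1+P_G^\ast=P_G$ and simplifying yields $1/P_{G\ast H}=1/P_G+1/P_H-1$, which is exactly the stated identity. This is a routine algebraic check, and it is valid in formal power series because $P_G^\ast P_H^\ast$ has zero constant term.

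Part (3) is the main analytic point. On $[0,\rho_0)$ all the series converge and have nonnegative coefficients. Hence $P_{G\ast H}(x)$, as a series with nonnegative coefficients, converges exactly as long as the rearranged sum does. From the product formula above, the sum $\sum_k$ of alternating products converges for real $0\le x<\rho_0$ if and only if $P_G^\ast(x)P_H^\ast(x)<1$. The function $u(x)=P_G^\ast(x)P_H^\ast(x)$ is continuous and strictly increasing on $[0,\rho_0)$ with $u(0)=0$.

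Next I relate $u=1$ to $\Phi=1$. Using $\Phi(x)=1/P_G+1/P_H$, the condition $\Phi(x)=1$ is equivalent to $P_G+P_H=P_GP_H$, that is, $(P_G-1)(P_H-1)=1$, which is $u(x)=1$. So $\rho$ is exactly the first point at which $u$ reaches $1$.

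For $x<\rho$ we have $u<1$, so $P_{G\ast H}(x)$ is finite. As $x\uparrow\rho$ we have $u\to 1$ while the numerator stays bounded below by $1$. Thus $P_{G\ast H}(x)\to\infty$, and by monotone convergence the series diverges at $\rho$. Therefore $\radconv_{G\ast H}=\rho$, and part (1) gives $e=1/\rho$.

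The step needing care is justifying that convergence of the series at real $x$ is equivalent to $u(x)<1$. This follows from nonnegativity of coefficients and Tonelli's theorem for rearranging nonnegative double sums.
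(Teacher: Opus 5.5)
Your proof is correct and follows the same idea as the paper. The paper cites de la Harpe for (1)--(2) and, for (3), uses the identity $1/P_{G\ast H,S\sqcup T}=\Phi-1$ from (2) to place the radius of convergence at the first root of $\Phi=1$, which is exactly what your equivalence $\Phi=1\iff (P_{G,S}-1)(P_{H,T}-1)=1$ does. Your Tonelli expansion $P_{G\ast H,S\sqcup T}(x)=P_{G,S}(x)P_{H,T}(x)\sum_k u(x)^k$ actually supplies the justification for convergence below $\rho$, which the paper only asserts. Your caveat that (1) must be read for infinite $G$ is also accurate: for finite $G$ one has $\radconv_{G,S}=\infty$ but $e(G,S)=1$. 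This is harmless here, since every group the paper applies the lemma to is infinite.
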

\begin{proof}
For the proofs of (\ref{p:free-prod1}) and (\ref{p:free-prod}), see~\cite[VI.A, VI.C]{dlHarpe2000}.

We show (\ref{p:phi}). 
For brevity, let $P:=P_{G\ast H,S\sqcup T}$ and $\rho:=\radconv_{G\ast H,S\sqcup T}$. Note that $\Phi$ is strictly decreasing and analytic on $(0,\rho_0)$.
For an arbitrary $s\in (\radconv,\rho_0)$, we see that $\Phi(s)<1$, which implies by (\ref{p:free-prod}) that $\radconv\le s$.
On the other hand, every $s\in(0,\radconv)$ satisfies that $\Phi(s)>1$; 
in this case the power series $1/(2-\Phi(x))$ is convergent on $(0,s)$ and hence, so is $P(x)$.  It follows that $\radconv \ge s$, which implies the claim.
\end{proof}
\begin{exmp}\label{exmp:free}
The following formulae can be derived from direct computations \cite{GAP4}.
For the standard generating set $S_d$ of $F_d$ with $d\ge 1$, we have
\begin{equation}\label{eq:d}
P_{F_d,S_d}(x)=\frac{1+x}{1-(2d-1)x}.
\end{equation}
For the generating sets $\{1\}$, $\{1, 2\}$, $\{1, 3\}$, and $\{2,3\}$ of $\Z$, we have
\begin{equation}\label{eq:1}
P_{\mathbb{Z},\{1\}}(x) = 1+\sum_{i\ge1}2x^i=\frac{1+x}{1-x},
\end{equation}
\begin{equation}\label{eq:12}
P_{\mathbb{Z},\{1,2\}}(x) = 1+\sum_{i\ge1}4x^i= \frac{1+3x}{1-x},
\end{equation}
\begin{equation}\label{eq:13}
P_{\mathbb{Z},\{1,3\}}(x) = 1+4x+\sum_{i\ge2}6x^i= \frac{1+3x+2x^2}{1-x},
\end{equation}
\begin{equation}\label{eq:23}
P_{\mathbb{Z},\{2,3\}}(x) = 1+4x+8x^2+\sum_{i\ge3}6x^i= \frac{1+3x+4x^2-2x^3}{1-x}.
\end{equation}
For the generating set $\Delta_2:=\{a_1, a_2, a_1a_2\}$ of $F_2$,
we have
\begin{equation}\label{eq:triangle}
P_{F_2, \Delta_2}(x)=\frac{1+2x}{1-4x}.
\end{equation}
\end{exmp}

We use the following lemma to compare growth rates.

\begin{lem}\label{lem:sph-cyc}
Let $U\subseteq\mathbb{Z}_+$ be a generating set of the additive group $\mathbb{Z}$.
\begin{enumerate}
\item\label{p:E2-1} 
If $|U|\ge 2$, 
then 
\[
P_{\mathbb{Z},U}(x)\ge 
P_{\mathbb{Z}, \{1, 2\}}(x)
\]
for all $x\in(0,1)$; furthermore, the equality holds for some $x \in (0, 1)$ if and only if $U=\{1, 2\}$.
\item\label{p:E2-2} 
If $U$ is none of $\{1\}$, $\{1, 2\}$, or $\{1, 3\}$, 
then 
\[
P_{\mathbb{Z},U}(x) > 
P_{\mathbb{Z},\{1,3\}}(x)
\]
for all $x\in(0, 1)$.
\end{enumerate}
\end{lem}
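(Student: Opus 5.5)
The plan is to compare the two series coefficientwise. Write $P_{\mathbb{Z},U}(x)=\sum_{n\ge0} c_n x^n$, where $c_0=1$ and $c_n=|\{m\in\mathbb{Z}: |m|_U=n\}|$ for $n\ge1$. By Example~\ref{exmp:free}, $P_{\mathbb{Z},\{1,2\}}$ has coefficients $(1,4,4,4,\dots)$ and $P_{\mathbb{Z},\{1,3\}}$ has coefficients $(1,4,6,6,\dots)$. For $x\in(0,1)$ all powers $x^n$ are positive, so it suffices to prove coefficientwise inequalities. The same positivity gives strict inequality at every $x\in(0,1)$ as soon as a single coefficient is strictly larger.

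The key structural input is a uniform lower bound on sphere sizes. The set $U$ is a finite generating set of $\mathbb{Z}$ consisting of positive integers, so the ball $B_U(n)$ is finite and nonempty. Let $M_n=\max\{m: |m|_U\le n\}$. Because $U\subseteq\mathbb{Z}_+$, the set $B_U(n)$ is symmetric, and $M_n$ is attained by a sum of $n$ positive generators, namely $n\max U$. I claim that $M_n$ and $M_n-1$ both lie in the sphere $S_U(n)$ once $n\ge1$, and similarly for $-M_n$ and $-(M_n-1)$. Indeed $M_{n-1}=(n-1)\max U$, so $M_n-1\ge M_{n-1}+\max U-1>M_{n-1}$ when $\max U\ge2$. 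Both numbers therefore have length exactly $n$, provided $M_n-1\in B_U(n)$. Here I would use that $\gcd(U)=1$ and $|U|\ge2$, so $U$ contains some $u<\max U$ and $(n-1)\max U+u$ is a candidate. Rather than insisting on $M_n-1$ exactly, it is enough to take the two elements $(n-1)\max U+u'$ for the two largest $u'\in U$ (together with their negatives). Both exceed $M_{n-1}$, so they have length exactly $n$. This gives $c_n\ge4$ for all $n\ge1$ whenever $|U|\ge2$, which proves the inequality in (\ref{p:E2-1}).

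For the equality case in (\ref{p:E2-1}), I will show that $U\ne\{1,2\}$ forces some $c_n\ge5$. If $|U|\ge3$, then $c_1=2|U|\ge6$. If $U=\{p,q\}$ with $p<q$ coprime and $q\ge3$, then the three positive numbers $2q$, $p+q$ and $2p$ all have length at most $2$. They are not $\pm$ generators unless $2p=q$, which coprimality excludes once $q\ge3$; they are also distinct from each other. Hence $c_2\ge6>4$.

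For (\ref{p:E2-2}), the case $U=\{1\}$ is excluded, and the cases with $c_1\ge6$ are immediate provided I also show $c_n\ge6$ for all $n\ge2$. So it suffices to show $c_n\ge6$ for all $n\ge2$ whenever $U\notin\{\{1\},\{1,2\},\{1,3\}\}$, and then to exhibit one strict coefficient. This is the main obstacle. I would handle it with a three-element version of the sphere argument: when $|U|\ge2$ and $q=\max U\ge4$, or $|U|\ge3$, produce three positive elements of length exactly $n$, namely $(n-1)q+u$ for three values near the top. If $U$ has only two elements $p<q$, then $(n-2)q+2p$ should serve as the third element. One must check that it exceeds $M_{n-1}=(n-1)q$, i.e.\ $2p>q$, and otherwise argue differently. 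The fallback is direct analysis of two-generator sets $\{p,q\}$, including the explicit series for $\{2,3\}$. Strictness then comes from $c_1\ge6$ when $|U|\ge3$, or from $c_2\ge8$ when $U=\{p,q\}$ with $q\ge4$, or from the explicit $\{2,3\}$ series, whose coefficient $8x^2$ beats $6x^2$.
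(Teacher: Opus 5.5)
Part (1) is sound and is essentially the paper's argument. With $q=\max U$ and $p'$ the next largest element, $nq$ and $(n-1)q+p'$ exceed $(n-1)q=\max B_U(n-1)$, and symmetry supplies their negatives. Your equality argument via $2q$, $p+q$, $2p$ is a valid alternative to the paper's use of $q-p$. Drop the discarded claim about $M_n-1$: it is false in general, e.g.\ $3n-1$ has length $n+1$ for $U=\{1,3\}$.

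The genuine gap is in part (2), in the two-generator case $U=\{p,q\}$ with $q>2p$. This is an infinite family, containing $\{1,q\}$ for every $q\ge4$, $\{2,5\}$, $\{3,7\}$, and so on. Here the ``three values near the top'' do not exist. An element of $B_U(n)$ exceeding $(n-1)q$ must be a sum $kq+(n-k)p$ of exactly $n$ positive generators, and for $k\le n-2$ this exceeds $(n-1)q$ only when $q<2p$. So $nq$ and $(n-1)q+p$ are the only elements above $B_U(n-1)$. The third positive element of length $n$ you need must therefore lie below $(n-1)q$, where membership in $B_U(n-1)$ is not ruled out by size. ``Argue differently'' and ``direct analysis of two-generator sets'' do not supply it. So for these $U$ you have not shown $c_n\ge6$ for $n\ge3$, and the strict inequality $P_{\mathbb{Z},U}>P_{\mathbb{Z},\{1,3\}}$ is unproved.

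The missing ingredient is the paper's claim that $u_n:=(n-1)q-p$ has $|u_n|_U=n$ for $n\ge2$ when $q\ge 2p+1$. Suppose $u_n=iq+jp$ with $|i|+|j|\le n-1$. Then $(n-1-i)q=(j+1)p$, so coprimality gives an integer $k$ with $j+1=kq$ and $n-1-i=kp$. Each case is a contradiction:
\begin{itemize}
\item $k=0$ gives $|i|+|j|=n$;
\item $k<0$ gives $i>n-1$;
\item $k>0$ with $kp\le n-1$ gives $|i|+|j|=n-2+k(q-p)\ge n$, because $q-p\ge p+1\ge2$;
\item $k>0$ with $kp\ge n$ gives $|i|+|j|=k(p+q)-n>n$, because $kq>2kp\ge2n$.
\end{itemize}
With $\pm u_n$, $\pm\bigl((n-1)q+p\bigr)$ and $\pm nq$ you get $c_n\ge6$ for all $n\ge2$. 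Your strictness bookkeeping then completes the proof as in the paper: $c_1\ge6$ when $|U|\ge3$; $c_2\ge8$ for $U=\{p,q\}$ with $q\ge4$, since $2p$, $q-p$, $q+p$, $2q$ are then distinct of length $2$; and the explicit series for $\{2,3\}$.
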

\begin{proof}
Let $M:=\max U$.
We show \eqref{p:E2-1},
Take $0<a<M$ such that $\{a, M\} \subset U$.
Observe that $\pm Mn$, $\pm(M(n-1)+a)$ are four distinct elements of length $n$ for every $n \ge 1$.
By Example \ref{exmp:free} \eqref{eq:12},
this shows the required inequality
\[
P_{\mathbb{Z},U}(x)\ge 
1+\sum_{i\ge 1}4x^i=
P_{\mathbb{Z}, \{1, 2\}}(x)
\]
for all $x\in (0, 1)$.
Furthermore, if the equality holds for some $x \in (0, 1)$,
then the number of elements of length $n$ is $4$ for every $n\ge 1$.
Thus, $U=\{a, M\}$, where $a$ and $M$ are relatively prime to generate $\mathbb{Z}$.
Moreover, the element $M-a$ should not belong to the radius-$2$ sphere;
this forces $M-a=a$, from which $M=2a$ and thus $a=1$.
Therefore, $U=\{1, 2\}$.

We show \eqref{p:E2-2}.
First, consider the case that $|U|\ge 3$.
Take $0<a<b<M$ in $U$.
Then, the number of elements of length $1$ is at least $6$.
For every $n \ge 2$, the elements $Mn$, $M(n-1)+a$, and $M(n-1)+b$, are distinct and have length $n$. 
Indeed, they are strictly greater than $M(n-1)$, having length at least $n$.
Taking into account their inverses, we have at least $6$ elements of length $n$.
Hence, by Example \ref{exmp:free} \eqref{eq:12} and \eqref{eq:13}, we obtain 
\begin{equation*}
P_{\mathbb{Z}, U}(x)\ge 1+\sum_{i\ge 1}6x^i>P_{\mathbb{Z},\{1,3\}}(x)
>P_{\mathbb{Z},\{1,2\}}(x)
\end{equation*}
for all $x \in (0, 1)$.

Next, assume that $|U|=2$.
We have $U=\{a, M\}$ where $0<a<M$.
Then $a$ and $M$ are relatively prime since they generate $\mathbb{Z}$.
By assumption, $U\neq \{1,2\}$.
Thus, we may assume that $M\neq 2a$.
We will show that for every $n\ge 2$, the number of elements of length $n$ is at least $6$.
To show this, we will find $3$ positive elements of length $n$.

Suppose that $M<2a$. 
Then, the $3$ elements $Mn$, $M(n-1)+a$, and $M(n-2)+2a$ have length $n$ since they are strictly greater than $M(n-1)$.
Furthermore, noting that $0<M-a<a$, we have at least $4$ positive elements of length $2$: 
$$M-a < 2a <  M+a < 2M.$$
Thus, by Example \ref{exmp:free} \eqref{eq:13} and \eqref{eq:23}, we have 
$$P_{\mathbb{Z},U}(x)\ge 1+4x + 8 x^2 + \sum_{i\ge3} 6x^i = P_{\mathbb{Z},\{2,3\}}(x) > P_{\mathbb{Z},\{1,3\}}(x).$$

Finally, we assume that $M>2a$. 
The elements $Mn$ and $M(n-1)+a$ have length $n$.
We set $u_n:=M(n-1)-a$. 

\begin{claim*}
We have $|u_n|_U=n$.
\end{claim*}

By definition, $|u_n|_U\le n$.
We show that $|u_n|_U \ge n$.
Argue by contradiction; assume that $u_n=iM+ja$ with $|i|+|j|\le n-1$.
Since $(n-1-i)M=(j+1)a$, and $a$ and $M$ are relatively prime, there exists an integer $k$ such that 
\[
j+1=kM \quad \text{and} \quad n-1-i=ka.
\]
We have $k\ne0$ for otherwise, we would have $|i|+|j|=n-1+1=n$.
If $k<0$, then $i>n-1$, whence we have the contradiction $|i|+|j|>n-1$.
If $k>0$ and $n-1\ge ka$, then we have $j=kM-1\ge 0$ and $i=n-1-ka\ge 0$; in this case, we have 
$$0\le i = n-1-ka\le n-2,$$
and 
$$|i|+|j|=i+j = n-2+k(M-a)\ge n,$$
where we have used $M\ge 2a+1$.
It remains to consider the case when $k>0$ and $n-1<ka$.
Then, we have
\[
|i|+|j|=-i+j = k(M+a)-n>2n-n=n.
\]
Thus, $u_n$ must have the length strictly greater than $n-1$ for every $n\ge 2$, as required.
This completes the proof of the claim.

By the claim, we have at least $6$ elements of length $n$. 
Note that $a<M-a$, and that $2a= M-a$ if and only if $(a, M)= (1, 3)$.
Since we are assuming $U\neq \{1, 3\}$, we have at least $4$ positive elements of length $2$: $2a$, $M-a$, $M+a$, and $2M$. 
It follows that by Example \ref{exmp:free} \eqref{eq:13} and \eqref{eq:23},
$$P_{\mathbb{Z},U}(x) \ge
1 + 4 x + 8 x^2 + 6\sum_{i\ge3} x^i 
=P_{\mathbb{Z},\{2,3\}}(x)>P_{\mathbb{Z},\{1,3\}}(x),
$$
showing the strict inequality for all $x\in (0, 1)$.
\end{proof}

We show Proposition \ref{prop:cyclic} stated in Section \ref{sec:outline}.

\begin{proof}[Proof of Proposition~\ref{prop:cyclic}]
\eqref{p:12} and \eqref{p:13} follow from direct computations.
We show \eqref{p:pq}.
Let $1\le p<q$ be relatively prime. 
We set
$$\Phi^d_{p,q}(x):=\frac1{P_{\mathbb{Z},\{p,q\}}(x)}+\frac{1-(2d-3)x}{1+x},$$ and denote by $\rho^d_{p,q}$ the smallest root of the equation $\Phi^d_{p, q}(x)=1$ as in Lemma \ref{lem:spherical-free} \eqref{p:phi}.
Using Example \ref{exmp:free} \eqref{eq:d}, \eqref{eq:12}, and \eqref{eq:13}, 
we see that
\begin{align*}
\Phi^d_{1,2}(x) &= \frac{1-x}{1+3x}+\frac{1-(2d-3)x}{1+x},\\
\Phi^d_{1,3}(x) &= \frac{1-x}{1+3x+2x^2}+\frac{1-(2d-3)x}{1+x}.
\end{align*}
Furthermore, 
by Lemma \ref{lem:sph-cyc} \eqref{p:E2-2}, we have
$\Phi^d_{p,q}(x)\le\Phi^d_{1,3}(x)$ for $\{p,q\}\ne\{1,2\}$,
where the equality constantly holds for $\{p,q\}=\{1,3\}$.
By Lemma \ref{lem:spherical-free},
we have
\begin{align*}
\rho_{1,2}^d&=\frac{\sqrt{d^2+4 d-4}-d+1}{6 d-5}=\frac1{\alpha_d(1,2)},\\
\rho_{1,3}^d&=\frac{\sqrt{4 d^2+12 d-15}-2 d+1}{8 (d-1)}=\frac1{\alpha_d(1,3)}.
\end{align*}
Also, Lemma~\ref{lem:sph-cyc} (\ref{p:E2-2}) implies that
 $$\alpha_d(p,q)>\alpha_d(1,3)>\alpha_d(1,2)$$ when $\{p,q\}\ne\{1,2\},\{1,3\}$.
 Therefore, we obtain \eqref{p:pq}.

We show \eqref{p:b}.
Consider the generating set
$$S=\{a_1,a_2,a_1a_2\}\sqcup \{a_3,a_4,\ldots,a_d\}\subseteq F_d.$$
For $\Delta_2:=\{a_1,a_2,a_1a_2\}$, by Example \ref{exmp:free} \eqref{eq:triangle}, we have 
$$
P_{F_2,\Delta_2}(x) = \frac{1+2x}{1-4x}.$$
We obtain
$$
\frac1{P_{F_2,\Delta_2}(x)}+ 
\frac1{P_{F_{d-2},S_{d-2}}(x)}
= \frac{1-4x}{1+2x} + 
\frac{1-(2d-5)x}{1+x}=\Phi_{1,3}^d(x),$$
where the last equality follows from a direct computation.
This implies that $\beta_d=\alpha_d(1,3)$, completing the proof.
\end{proof}

\section{Growth estimate for Magnus-marked groups}\label{sec:just-non-free}

In this section, we prove Proposition~\ref{prop:noncyc}. 
For a graph $\Gamma$, we denote by $V\Gamma$ the vertex set and by $E\Gamma$ the edge set.
For every set $A\subseteq V\Gamma$, 
we denote by $\Gamma[A]$ the induced subgraph of $\Gamma$ on the vertex set $A$.

\begin{lem}[Lemma \ref{lem:nonsplit_intro}]\label{lem:nonsplit}
If $T$ is a Magnus marking of a group $H$ such that 
$t:=|T|-1\ge2$, 
then for all finite subset $A\subset H$ we have 
\begin{equation}
|A(T\cup T^{-1})\setminus A|
\ge 2\left(t-\frac{1}{t}\right)|A|+\frac{2(t+1)}{t}.
\end{equation}
\end{lem}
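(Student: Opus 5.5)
The plan is to read the inequality off the Cayley graph $\Gamma=\cay(H,T)$, exploiting the fact that deleting any one colour leaves a forest. Write $T=\{s_0,\dots,s_t\}$ and let $\Gamma_i$ be the spanning subgraph of $\Gamma$ keeping only edges labelled by $T_i:=T\setminus\{s_i\}$. Because $T_i$ freely generates a free group of rank $t$, each $\Gamma_i$ is a disjoint union of $2t$-regular trees, one for each coset of $\langle T_i\rangle$. Since $t\ge2$, every pair from $T$ lies in some $T_i$. Hence $\Gamma$ has no loops, no multiple edges, and no elements of order $2$ among the generators. Consequently every vertex has exactly $2(t+1)$ distinct neighbours, and the $2(t+1)$ edges at a vertex carry distinct labels.

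\emph{Step 1 (edge count inside $A$).} Let $e$ be the number of edges of $\Gamma[A]$ and $e_i$ the number of edges of $\Gamma_i[A]$. Since $\Gamma_i[A]$ is a forest, $e_i\le |A|-1$. An edge of colour $j$ lies in $\Gamma_i$ exactly for the $t$ indices $i\ne j$, so $te=\sum_i e_i\le (t+1)(|A|-1)$. Counting edge-ends, the number $f$ of edges from $A$ to $N:=A(T\cup T^{-1})\setminus A$ is
\[
f=2(t+1)|A|-2e\ \ge\ 2\Bigl(t-\tfrac1t\Bigr)|A|+\tfrac{2(t+1)}{t}.
\]
This is exactly the right-hand side of the lemma, but for the \emph{edge} boundary.

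\emph{Step 2 (passing to the vertex boundary).} For $v\in N$ let $k_v\ge1$ be the number of edges from $v$ into $A$; these edges have distinct colours, and $f=\sum_v k_v$. I would apply the forest bound to $\Gamma_i$ restricted to the edges incident to $A$ on the vertex set $A\cup N_i$, where $N_i$ is the set of $v\in N$ having an $A$-edge of colour $\ne i$. This gives $e_i+f_i\le |A|+|N_i|-1$. Summing over $i$, using $\sum_i f_i=tf$ and $\sum_i|N_i|=(t+1)|N|-N_1$ with $N_1:=\#\{v: k_v=1\}$, yields
\[
t(e+f)\le (t+1)(|A|-1)+(t+1)|N|-N_1 .
\]
Substituting $f=2(t+1)|A|-2e$ and the bound on $e$ from Step 1 gives $|N|\ge(2t-2)|A|+2+N_1/(t+1)$. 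That is short of the target by roughly $\tfrac{2(t-1)}{t}|A|$, so the real content is to exploit vertices with $k_v\ge2$ more efficiently.

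\emph{The main obstacle} is this passage from edge boundary to vertex boundary: the naive trade-off between $N_1$ and $\sum_v k_v$ goes the wrong way. A vertex $v$ with $k_v\ge2$ closes a path through $A$ in all $\Gamma_i$ except those missing one of its colours. My refinement would be to treat $v$ as an extra vertex of $A$, i.e.\ work with $A\cup N_{\ge2}$, where $N_{\ge2}:=\{v: k_v\ge2\}$. I would rerun Step 1 on this enlarged set, and then bound its vertex boundary together with the pendant vertices $N_1$ by the same forest-summing argument. This produces two linear inequalities in $|A|,|N_1|,|N_{\ge2}|,e$, and I would then take the convex combination that eliminates $e$ and $|N_{\ge2}|$. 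I expect the optimal weights to be $1/t$-type coefficients, giving the constant $2(t-1/t)$ and the additive term $2(t+1)/t$. If this fails, the fallback is an induction on $|A|$: remove a vertex of $A$ of minimal degree in $\Gamma[A]$, which the forest structure forces to be small, and track the change in $|N|$.
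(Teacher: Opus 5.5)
Your Step 1 and the forest inequality $e_i+f_i\le |A|+|N_i|-1$ in Step 2 are correct. They are precisely the two ingredients of the paper's proof, which writes them as $|E\Gamma_j[A]|=|A|-c_j(A)$ and $c_j(A)-1\ge|\partial_E^jA|-|\partial_V^jA|$. One small point: $\sum_i|N_i|=(t+1)|N|-N_1$ should be $\le$, because the two $A$-edges at $v$ may be labelled $s_i$ and $s_i^{-1}$. Only the upper bound is needed, so this is harmless. However, the proposal does not prove the lemma. By your own computation, Step 2 only gives $|N|\ge(2t-2)|A|+2+N_1/(t+1)$. The proposed remedy (enlarging $A$ to $A\cup N_{\ge2}$ and hoping some convex combination works, or an unspecified induction) is never carried out. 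So there is a genuine gap exactly at the step you flag as the main obstacle.

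That obstacle comes from the order in which you eliminate variables. You inserted the upper bound on $e$ first and then discarded $N_1\ge 0$. Instead, keep $e$ free and use that every $v\in N$ not counted in $N_1$ has $k_v\ge2$, so $f=\sum_v k_v\ge 2|N|-N_1$. Putting $N_1\ge 2|N|-f$ into your summed inequality gives $te+(t-1)f\le(t+1)(|A|-1)+(t-1)|N|$. Substituting $f=2(t+1)|A|-2e$ then gives
\[
(t-1)|N|\ \ge\ 2(t^2-1)|A|-(t+1)(|A|-1)-(t-2)e .
\]
Now $e$ has coefficient $-(t-2)\le 0$, which is where $t\ge2$ really enters. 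So Step 1's bound $te\le(t+1)(|A|-1)$ points the right way and yields $(t-1)|N|\ge 2(t^2-1)|A|-\frac{2(t^2-1)}{t}(|A|-1)$, which is exactly the lemma.

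This is what the paper does in its own notation. It bounds each boundary vertex by $\sum_j\max(d_j(v)-1,0)\ge\max(tk(v)-(t+1),0)\ge(t-1)(k(v)-1)$. The summed defect is therefore compared with $(t-1)(|\partial_EA|-|\partial_VA|)$ rather than with $|\partial_VA|$ alone. The paper keeps $C(A)=\sum_jc_j(A)$ as a free parameter; it is equivalent to your $e$ via $te=(t+1)|A|-C(A)$. Only at the end does it use that the coefficient $\frac{t-2}{t(t-1)}$ of $C(A)$ is nonnegative, and substitute $C(A)\ge t+1$. No enlargement of $A$ and no induction is needed.
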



\begin{proof}
For brevity of notations, for a set $B \subset H$,
let $B^{\pm}:=B\cup B^{-1}$.
We write $T=\{x_0,\ldots,x_t\}$. 
For each $j=0, 1, \dots, t$,
let $\Gamma_j$ denote the subgraph of $\Gamma:=\cay(H,T)$ with the same vertex set, containing only the edges labeled by $$T_j:=T\setminus\{x_j\}.$$ 
We fix $A \subset H$ and introduce the following notations:
\begin{align*}
\partial_V A&:=AT^{\pm}\setminus A,\\
\partial_E A&:=\{\{a,as\}\mid a\in A,as\in H\setminus A\text{ and }s\in T^\pm\},\\
\partial_V^j A &:=\{as\mid a\in A,as\in H\setminus A\text{ and }s\in T_j^\pm\},\\
\partial_E^j A &:=\{\{a,as\}\mid a\in A,as\in H\setminus A\text{ and }s\in T_j^\pm\}.\end{align*}
Furthermore, we denote by $c_j(A)$ the number of connected components in $\Gamma_j[A]$.

By assumption, $T_j$ is a free basis of $\langle T_j\rangle$,
and thus the subgraph $\Gamma_j$ of $\Gamma$ is a $2t$-regular forest.
Hence $\Gamma_j[A]$ is a finite forest.
We observe that
$$\frac{2t|A|-|\partial_E^j A|}2=|E\Gamma_j[A]|
=|A|-c_j(A).$$
Define $C(A):=\sum_{j=0}^t c_j(A)$.
Since each edge in $\Gamma$ appears in $t$ different subgraphs $\Gamma_j$, we obtain
\begin{equation}\label{eq:eca}
\frac{t|\partial_E A|}2
=
\frac{\sum_{j=0}^t |\partial_E^j A|}2
=(t^2-1)|A|+C(A).
\end{equation}

Let us define $\hat\Gamma_j$ as the graph obtained by attaching the edges in $\partial_E^j A$ to $\Gamma_j[A]$, as well as the endpoints of those edges. We also produce a forest $\hat\Gamma_j'$ by contracting each connected component of $\Gamma_j[A]$ in $\hat\Gamma_j$. Since $\hat\Gamma_j'$ has at least one connected component, we have that
$$|V\hat\Gamma_j'|-|E\hat\Gamma_j'|=c_j(A) +|\partial_V^j A|-|\partial_E^j A|\ge1.$$
Letting $d_j(v)$ denote the number of (incoming or outgoing) $T_j$-labeled edges joining $v$ to $A$ in $\Gamma$, 
we have
\begin{equation}\label{eq:cj1}
c_j(A)-1
\ge
|\partial_E^j A|-
|\partial_V^j A|
=\sum_{v\in \partial_V^j A} (d_j(v)-1)
=\sum_{v\in \partial_V A} \max(d_j(v)-1,0).\\
\end{equation}
For each $v\in\partial_V A$, let $k(v)$ denote the number of neighbors of $v$ in $A$.
In particular, we have $\sum_{j=0}^t d_j(v)=t k(v)$.
Combined with~\eqref{eq:eca}, the above implies
\begin{align*}\label{eq:cad}
C(A)-(t+1)&=\sum_{j=0}^t (c_j(A)-1)\ge
\sum_{v\in \partial_V A} \sum_{j=0}^t \max(d_j(v)-1,0)\\
&
\ge \sum_{v\in \partial_V A} \max(t k(v)-(t+1),0)\ge (t-1)\sum_{v\in \partial_V A} (k(v)-1)\\
&=(t-1) (|\partial_E A|-|\partial_V A|)\\
&=(t-1)\left(
\frac{2
(t^2-1)|A|+2C(A)
}t
-|\partial_V A|
\right).
\end{align*}
Rearranging the terms, we obtain
\[
|AT^{\pm}\setminus A|
\ge 2\left(t-\frac{1}{t}\right)|A| + \left(\frac{2}{t}-\frac{1}{t-1}\right) \sum_{j=0}^t c_j(A)+\frac{t+1}{t-1}.
\]
Since $\sum_{j=0}^t c_j(A)\ge t+1$, we have
\[
|AT^{\pm}\setminus A|
\ge 2\left(t-\frac{1}{t}\right)|A|+\frac{2(t+1)}{t},
\]
as claimed.
\end{proof}

\begin{rem}
Note that $H$ itself is not assumed to be free. For instance, when $H$ is a one-relator group with a cyclically reduced relator fully supporting the generators, the hypothesis automatically holds by the Freiheitssatz (Corollary \ref{cor:Freiheitssatz}).
The hypothesis of $\Gamma$ being a Cayley graph was also only mildly used in the proof. In particular, one can prove a similar lemma for a general graph $\Gamma$ which can be expressed as a finite union of spanning forests, as long as each edge belongs to more than half of the forests in that union. We omit details that are irrelevant to this paper.
\end{rem}

We now prove Proposition \ref{prop:noncyc} stated in Section \ref{sec:outline}.
We denote by $\rk G$ the \emph{rank} of $G$, the smallest number of generators for $G$.
\begin{proof}[Proof of Proposition~\ref{prop:noncyc}]
Let us pick a minimal $T\subseteq U$ such that $$1\le t:=\rk\form{T}<|T|.$$
Since 
$$d = \rk\form{U} \le \rk \form{T}+\rk\form{U\setminus T}\le t+|U\setminus T|\le t+(d+1)-(t+1)=d,$$
we see that $|T|=t+1$, that  $\rk \form{U\setminus T}= d-t$ and that
$$F_d=\form{U}=\form{T}\ast \form{U\setminus T}\cong \form{T}\ast F_{d-t}.$$
The assumption implies $t\ge2$; for otherwise, every pair of elements in $T$ commute. By minimality, every proper subset $T_0$ of $T$ is a free basis of $\langle T_0\rangle$.
First, we apply Lemma \ref{lem:nonsplit} and find the spherical growth series for $(H, T)$ with $H=\langle T\rangle$.

The balls $B_T(n)$ in the Cayley graph for $(H, T)$ satisfy 
\[
|B_T(n+1)|\ge \left(2t-\frac{2}{t}+1\right)|B_T(n)|+\frac{2(t+1)}{t}.
\]
Thus, for the spheres $S_T(n)$, for all $n \ge 1$, we have
\[
|S_T(n)| \ge 2(t+1)\left(2t-\frac{2}{t}+1\right)^{n-1}.
\]
Therefore, the spherical growth series for $(H, T)$ satisfies
\[
P_{H, T}(x)\ge 1+\frac{2(t+1)x}{1-\left(2t-2/t+1\right)x}=\frac{1+(1+2/t)x}{1-(2t-2/t+1)x}.
\]
We have
$$
1-\frac1{P_{H,T}(x)}\ge 1 - \frac{1-(2t-2/t+1)x}{1+(1+2/t)x}=\frac{2(t+1)x}{1+(1+2/t)x}.
$$
By Lemma~\ref{lem:spherical-free}, we have
$$
1-\frac1{P_{F_d,U}(x)}
=
1-\frac1{P_{H,T}(x)} +
1-\frac1{P_{F_{d-t},S_{d-t}}(x)}
\ge \frac{2(t+1)x}{1+(1+2/t)x}+\frac{2(d-t)x}{1+x}.
$$
We claim that
\begin{equation}\label{eq:wolfram}
\frac{2(t+1)x}{1+(1+2/t)x}+\frac{2(d-t)x}{1+x} \ge \frac{6x}{1+2x}+\frac{2(d-2)x}{1+x}
\end{equation}
for all $x \ge 0$.
Indeed, a direct computation shows that 
\begin{equation*}\label{eq:LHSRHS}
(\mathrm{LHS})-(\mathrm{RHS})
=\frac{2 (t-2) (1-x) x^2}{(x+1) (2 x+1) ((t+2) x+t)},
\end{equation*}
and hence, that the inequality \eqref{eq:wolfram} holds for $t\ge2$ and $x\in(0,1)$.

Note that $1/\beta_{d}$ is the positive root of
\[
\frac{6x}{1+2x}+\frac{2(d-2)x}{1+x}=1,
\]
equivalently, $(4d-4)x^2+(2d-1)x-1=0$
since $\beta_d=\alpha_d(1, 3)$ by Proposition \ref{prop:cyclic}.
By Lemma \ref{lem:spherical-free},
we have
$e(F_d, U) \ge \beta_d$, as required.
\end{proof}

\section{Proof of Theorem \ref{thm:main}}\label{sec:gaps}

We now prove Theorem \ref{thm:main}.

\begin{proof}
Fix $d\ge 2$.
By Proposition \ref{prop:cyclic}, 
we have
$$\alpha_d(1,2),\alpha_d(1,3)\in \xi(F_d).$$

Fix a finite generating set $S$ of $F_d$. 
We will prove that if
\begin{align}\label{eq:efd}
    2d-1 < e(F_d,S) < \alpha_d(1,3),
\end{align}
then $e(F_d,S)=\alpha_d(1,2)$.
Furthermore, it follows from the proof  that 
$$ S = \{a_1,a_1^{2},a_2,\ldots,a_d\},$$
possibly 
after replacing elements of $S$ by their inverses and 
applying an automorphism of $F_d$.

Assume $S$ satisfies (\ref{eq:efd}).
First, note that $|S|>d$, for otherwise $|S|=d$; $S$ would be a free basis and $e(F_d,S)=2d-1$.
There exists a subset $U_0\subseteq S$ with $|U_0|=d$ such that $U_0$ is a free basis of $\form{U_0}$. Indeed, one may choose a subset of $S$ 
whose span has the maximal rank in the abelianization $F_d/[F_d, F_d]$.
Adding an arbitrary element of $S\setminus U_0$ to $U_0$, we obtain $U_0\subsetneq U\subseteq S$ such that $|U|=d+1$.
We note that 
$$e(F_d,S)\ge e(\form{U},U)$$
and that
$$ d+1=|U|\ge \rk \form{U} \ge d.$$ 

We claim that $\rk \form{U}=d$. Otherwise, we would have
that $\rk \form{U}=|U|=d+1$ and hence $U$ would be a free basis for $\form{U}$. 
It would follow that
$$e(F_d,S)\ge e(\form{U},U)=2d+1>\alpha_d(1,3),$$
contradicting (\ref{eq:efd}).

Now, if $U$ does not contain a commuting pair, then Proposition~\ref{prop:noncyc} applies to $(\form{U},U)$ and implies that 
$$e(F_d,S)\ge e(\form{U},U)\ge \alpha_d(1,3),$$
contradicting (\ref{eq:efd}).
Thus,  $U$ contains a commuting pair, and   we may write
$$U=\{v^p,v^q,u_2,\ldots,u_d\}$$
for some $1\ne v\in F_d$ and for some relatively prime integers $p$ and $q$.

Since $U_0$ is a free basis, at most one of $v^p$ and $v^q$ belongs to $U_0$. Without loss of generality, 
we may assume $U_0=U\setminus\{v^q\}$. 
Furthermore, no two distinct elements of $U_0$ commute. 


First, we observe that for every $s \in S\setminus U_0$,  the rank of $\form{U_0\cup\{s\}}$ is $d$. Indeed, otherwise, $U_0\cup\{s\}$ would be a free basis of a rank-$(d+1)$ subgroup, and hence
$$\alpha_d(1,3)<2d+1=e(\langle U_0\cup \{s\} \rangle, U_0 \cup \{s\}) \le e(F_d,S),$$
contradicting (\ref{eq:efd}).

Therefore, the set $U_0\cup\{s\}$ must contain a commuting pair.
Otherwise, Proposition~\ref{prop:noncyc} applies, and hence,  $$e(F_d,S)\ge e(\form{U_0\cup \{s\}}, U_0\cup \{s\}) \ge \alpha_d(1,3),$$
contradicting (\ref{eq:efd}).

It follows that for every $s\in S$, there exists $u \in U_0$ such that $s$ and $u$ commute.
This implies that
\[
S \subset \bigcup_{u \in U_0}\langle p_u\rangle,
\]
where $p_u$ denotes a primitive generator of the centralizer of $u$.
Since $S$ generates $F_d$, the set $\{p_u\}_{u \in U_0}$ is a free basis of $F_d$.
Below each $\langle p_u\rangle$ is identified with $\mathbb{Z}$.

For every $u\in U_0$, 
we define $T_u:=S\cap\langle p_u\rangle$.
Note that $T_u$ generates $\langle p_u\rangle$.
Taking the inverses if necessary, we assume that $T_u$ consists of positive integers.

Since $S$ is not a free basis of $F_d$, at least one of the sets $T_u$ is different from $\{1\}$.
We now distinguish three cases. 

First, suppose that  exactly one $T_u$ is $\{1, 2\}$ and that all the others are $\{1\}$. In this case, we have that 
$$S=\phi\left(\{a_1,a_1^2,a_2,\ldots,a_d\}\right)$$
for some $\phi\in\Aut(F_d)$.
In this case, $e(F_d, S)=\alpha_d(1,2)$ by Proposition \ref{prop:cyclic} and we are done.

Next, suppose that exactly one $T_u$ is not $\{1\}$ and that it is not equal to $\{1, 2\}$.
By Lemma \ref{lem:sph-cyc} \eqref{p:E2-2} and Example \ref{exmp:free} \eqref{eq:13}, we have
\[
P_{\mathbb{Z}, T_u}(x) \ge \frac{1+3x+2x^2}{1-x}
\]
for $x \in (0, 1)$, and for all $u'\in U_0\setminus\{u\}$, we have
\[
P_{\mathbb{Z}, T_{u'}}(x)=P_{\mathbb{Z}, \{1\}}(x)=\frac{1+x}{1-x}.
\]
Note that $1/\beta_d$ is the positive root of
\[
\frac{4x+2x^2}{1+3x+2x^2}+\frac{2(d-1)x}{1+x}=1,
\]
equivalently, $(4d-4)x^2+(2d-1)x-1=0$ by Proposition \ref{prop:cyclic}.
Thus, Lemma \ref{lem:spherical-free} implies that $e(F_d, S)\ge \alpha_d(1,3)$,
contradicting (\ref{eq:efd}).

Finally, suppose that there exist distinct $u, u' \in U_0$ such that both $T_u$ and $T_{u'}$ are not $\{1\}$.
For such $T_u$, we have $|T_u| \ge 2$, and
\[
P_{\mathbb{Z}, T_u}(x) \ge \frac{1+3x}{1-x}
\]
for all $x \in (0, 1)$ by Lemma \ref{lem:sph-cyc}.
Let us consider the equation
\[
\frac{8x}{1+3x}+\frac{2(d-2)x}{1+x}=1.
\]
Defining the variable $z=1/x$, we obtain
\[
z^2-2d z-6d+7=0,
\]
and denote by $\gamma_d$ its positive root.
Lemma \ref{lem:spherical-free} shows that $e(F_d, S)\ge \gamma_d$.
Note that $\gamma_d>2d-1$ and that
\[
\gamma_d^2-(2d-1)\gamma_d-4d+4=\gamma_d+2d-3>0.
\]
Thus, $\gamma_d>\alpha_d(1,3)$, implying that $e(F_d, S)\ge \alpha_d(1,3)$,
contradicting (\ref{eq:efd}).
This completes the proof. 
\end{proof}

\begin{rem}\label{rem:eq}
    The proof also establishes 
    the equality condition for $e_2(F_d)$ stated in Remark \ref{rem:main}. 
    The statement concerning  $e_3(F_d)$ also follows immediately 
    from the above proof and Proposition \ref{prop:cyclic} (\ref{p:b}).
\end{rem}

\section{Proof of Theorem \ref{thm:surface}}\label{sec:surface}

The purpose of this section is to prove Theorem \ref{thm:surface}.
To this end, we use the following lemma, which may be of independent interest.

\begin{lemma}\label{lem:surface}
For every finite generating set $S$ of $\Gamma_g$,
if $e(\Gamma_g, S)<4g-1$,
then $S\cup S^{-1}$ contains a $2g$-element set $X$ such that $\Gamma_g=\langle X\rangle$.

Furthermore, if $Y$ is a $2g$-element set such that $\Gamma_g=\langle Y\rangle$,
then every proper subset $Z$ in $Y$ is a free basis of $\langle Z\rangle$.
\end{lemma}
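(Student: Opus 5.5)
Both parts rest on the structure of subgroups of $\Gamma_g$ of infinite index and on the abelianization $\Gamma_g\to\mathbb{Z}^{2g}$.

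\emph{Second part.} Let $Y$ be a $2g$-element generating set and $Z\subsetneq Y$. The image of $Y$ in $H_1(\Gamma_g;\mathbb{Z})\cong\mathbb{Z}^{2g}$ is $2g$ elements generating $\mathbb{Z}^{2g}$, hence a basis. So the images of $Z$ span a proper direct summand, and $\langle Z\rangle$ has infinite index in $\Gamma_g$: a finite-index subgroup would have image of finite index in $H_1$, which fails since $\rk$ of the image is $|Z|<2g$. Infinite-index finitely generated subgroups of a closed surface group are free, since they correspond to covers with noncompact total space. Moreover, $\langle Z\rangle$ surjects onto the free abelian group spanned by the images of $Z$, which has rank $|Z|$. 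Hence $\rk\langle Z\rangle\ge |Z|$. A free group generated by $|Z|$ elements and of rank at least $|Z|$ is freely generated by them, because free groups are Hopfian. Thus $Z$ is a free basis of $\langle Z\rangle$.

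\emph{First part.} Argue by contraposition. Suppose $e(\Gamma_g,S)<4g-1$, and choose a subset $X\subseteq S$ whose images in $H_1$ have maximal rank. Since $S$ generates $\Gamma_g$, this rank is $2g$, so we may pick $X$ with $|X|=2g$ and linearly independent image; this is the same device used in the proof of Theorem~\ref{thm:main}. If $\langle X\rangle=\Gamma_g$ we are done. Otherwise, I would show that $\langle X\rangle$ has infinite index, or handle the finite-index case separately, and then conclude that $\langle X\rangle$ is free. It surjects onto a rank-$2g$ lattice, so it is free of rank at least $2g$, and by Hopficity $X$ is a free basis. Then $e(\Gamma_g,S)\ge e(\langle X\rangle,X)=4g-1$, since a ball in $\langle X\rangle$ with respect to $X$ maps injectively into the ball in $\Gamma_g$ with respect to $S$. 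This is a contradiction.

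The main obstacle is the case where $\langle X\rangle$ is a proper subgroup of finite index $k>1$. It is then a surface group of genus $k(g-1)+1>g$, and it is not free. To handle it, I would use $S$ itself rather than $X$ alone, aiming to either enlarge or modify $X$ using elements of $S^{\pm}$. The plan is to replace elements of $X$ by elements of $S\cup S^{-1}$, possibly via Nielsen-type exchanges inside $S^{\pm}$, to get a generating $2g$-subset. Failing that, one exhibits a $2g$-element subset of $S$ generating an infinite-index, hence free, subgroup. The natural candidate is a subset whose $H_1$-image has rank $2g$ but which misses a loop. Concretely, if every $2g$-subset with independent homology image generates a finite-index subgroup, I would try to show that some $(2g+1)$-subset contains a $2g$-subset lying in an infinite-index subgroup. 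This would use rank counting via $\chi$: a finite-index subgroup needs at least $2k(g-1)+2$ generators, and this exceeds $2g$ when $k>1$. That count gives the contradiction directly, since a finite-index subgroup of index $k>1$ has rank $2k(g-1)+2>2g$ and so cannot be generated by the $2g$ elements of $X$. So in fact $\langle X\rangle$ of finite index forces $k=1$, which removes the obstacle.
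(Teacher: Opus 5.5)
Your proof is correct and follows the paper's argument. You pick $X\subseteq S$ whose abelianization image is $\mathbb{Q}$-independent, and rule out a proper finite-index $\langle X\rangle$ by the Euler characteristic count: such a subgroup would have rank $2+2k(g-1)>2g$. In the infinite-index case you use freeness together with the rank-$2g$ abelianization image to make $X$ a free basis, which gives $e(\Gamma_g,S)\ge 4g-1$. In the second part you get infinite index from the homology image having rank $|Z|<2g$ rather than from the rank count, which is an equally valid variant. The exploratory detour about Nielsen-type exchanges and $(2g+1)$-subsets is unnecessary and should be cut, since the rank count you end with settles the finite-index case at once.
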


\proof
Define $X:=\{x_1, \dots, x_{2g}\}$ in $S\cup S^{-1}$ whose images in the abelianization of $\Gamma_g$ form a basis over $\mathbb{Q}$.
Set $H:=\langle X\rangle$.
We will show that $H=\Gamma_g$.

If $H\neq \Gamma_g$ and $H$ has finite index $N \ge 2$ in $\Gamma_g$,
then the rank of $H/[H, H]$ is equal to $2+2N(g-1)$.
This follows since $H$ is the fundamental group of an $N$-fold cover $\widetilde{\Sigma}$ of the surface $\Sigma_g$, and the Euler characteristic of $\widetilde{\Sigma}$ is $N$ times that of $\Sigma_g$.
For $N\ge 2$,
we have 
\[
2+2N(g-1)>2g.
\]
This is absurd since $H$ is generated by $2g$ elements.
Thus, if $H\neq \Gamma_g$,
then $H$ has infinite index in $\Gamma_g$.
In this case, $H$ is the fundamental group of a non-compact surface, and thus $H$ is a free group.
Since the images of elements in $X$ are independent in $\Gamma_g/[\Gamma_g, \Gamma_g]$ over $\mathbb{Q}$,
they are independent in $H/[H, H]$ over $\mathbb{Q}$.
Hence $X$ defines a free basis of $H$.
Then, we have
\[
e(\Gamma_g, S) \ge e(H, X) =4g-1,
\]
contradicting the assumption that $e(\Gamma_g, S)<4g-1$.
Thus, $\Gamma_g=H=\langle X\rangle$.
This shows the first claim.

We show the second claim.
For $Z \subset Y$ with $|Z|=m \le 2g-1$, 
define $K:=\langle Z\rangle$.
An analogous argument to the above shows that $K$ has infinite index in $\Gamma_g$, and that $K$ is a free group.
Furthermore, considering the abelianization of $K$, the rank of $K$ is $m$, and thus $Z$ forms a free basis of $K$.
\qed

\proof[Proof of Theorem \ref{thm:surface}]
We assume that $e(\Gamma_g, S)<4g-1$, otherwise we have nothing to prove.
By Lemma \ref{lem:surface},
there exists a $2g$ element set $X$ in $S\cup S^{-1}$ such that $\Gamma_g=\langle X\rangle$, and every proper subset $Y$ of $X$ forms a free basis of $\langle Y\rangle$.
Thus, by Lemma \ref{lem:nonsplit}, we have
\[
e(\Gamma_g, X)\ge 2\left(2g-1-\frac{1}{2g-1}\right)+1=4g-1-\frac{2}{2g-1}.
\]
Since $e(\Gamma_g, S)\ge e(\Gamma_g, X)$, we obtain the claim.
\qed

\begin{rem}\label{rem:surface}
The spherical growth series  of $\Gamma_g$, with respect to the standard generating set $T_g$ for $g\ge 2$,
was computed in, for example, \cite[VI.A.8]{dlHarpe2000}:
$$P_{\Gamma_g, T_g}(x)=\frac{(x+1)(x^{2g}-1)}
{x^{2g+1}-(4g-1)x^{2g}+(4g-1)x-1}.$$

Let
$$D_g(x):=x^{2g+1}-(4g-1)x^{2g}+(4g-1)x-1.$$
By Lemma \ref{lem:spherical-free}, $e(\Gamma_g,T_g)^{-1}$ is the smallest positive real root of $D_g(x)=0$.
Since $D_g(x)$ is  an anti-reciprocal polynomial, $e(\Gamma_g,T_g)$
is the largest positive real root of $D_g(x)=0$.

A direct computation shows 
$$D_g\left(4g-1-\frac{2}{2g-1}\right)<0.$$
It follows that 
$$e(\Gamma_g, T_g) > 4g-1 -\frac{2}{2g-1}.$$
Hence, Theorem \ref{thm:surface} is not 
sufficient to conclude that $T_g$ realizes $e(\Gamma_g)$.
\end{rem}

\section{Questions}\label{sec:que}
The set $\xi(F_d)$ has an accumulation point (from below) $2d+1$. It follows from a small cancellation argument \cite{shukhov}.
A smaller accumulation point is known:
\[
\alpha_d(1, +\infty):=d+\sqrt{d^2+2d-3}.
\]
This value is the limit of $\alpha_d(1, q)$ as $q\to +\infty$. We point out that $\alpha_d(1, +\infty)$ is the growth rate of $\mathbb{Z}^2\ast F_{d-1}$ with the standard generating set. This can be checked by a direct computation. It also follows from a general result on the continuity of growth rate along certain convergent sequences in the space of marked groups; see \cite[Proposition 2.3]{FS}.

\begin{que}
What is the smallest accumulation point of $\xi(F_d)$ for $d\ge 2$?
\end{que}
We expect that $\alpha_d(1, +\infty)$ is the smallest accumulation point. 

It is worth pointing out that 
 $\alpha_d(1, +\infty)$ is the growth rate $e(F_d, U_\ast)$, where
\[
U_\ast:=\{a_1, a_1a_2a_1^{-1}, a_2, \dots, a_d\}.
\]
Hence, $\alpha_d(1, +\infty) \in \xi(F_d)$. 
As noted above, $2d+1$ is an accumulation point of $\xi(F_d)$. 
Observe that $5 \in \xi(F_2)$ when $d=2$; indeed, by a direct computation,
\[
e(F_2, \{a_1, a_1^2, a_2, a_2^2\})=5.
\]
However, we do not know if $2d+1 \in \xi(F_d)$ for $d>2$.

\begin{que}
Does $2d+1$ belong to $\xi(F_d)$ for every $d>2$?
    
\end{que}

We expect that $\xi(F_d)$ is not closed. 
This motivates the following question. 

\begin{que}
Is the set $\xi(F_d)$ closed in $\mathbb{R}$ for $d\ge 2$?
\end{que}

Following \cite{FS}, we denote by $d_n$ the minimal growth rate of $F_2$ with a generating set of cardinality $n$.
In \cite[Problem 7.5]{FS}, the authors asked for the value of $d_n$ and the generating sets that achieve it for each $n$.
As noted in Section \ref{sec:intro}, it is known that $d_2(F_2)=3$, achieved only by a free generating set $\{a_1, a_2\}$ \cite{dlHarpe2000, dlH}.
By Theorem \ref{thm:main} (1), $d_3(F_2)=1+2\sqrt{2}$, achieved by a generating set $\{a_1, a_1^2, a_2\}$.
We restate the remaining question.

\begin{que}\label{que:d4}
What are $d_n$ and the generating sets that achieve it for $n\ge 4$?
\end{que}

Although  $d_n$ is defined above only  for $F_2$, one can define the analogous quantity for $F_d$ with $d >2$ and ask the same questions.

\bibliographystyle{plain}
\bibliography{ref}

@preamble{"\def\cprime{$'$} "}

@preamble{"\def\soft#1{\leavevmode\setbox0=\hbox{h}\dimen7=\ht0\advance     \dimen7 by-1ex\relax\if t#1\relax\rlap{\raise.6\dimen7     \hbox{\kern.3ex\char'47}}#1\relax\else\if T#1\relax     \rlap{\raise.5\dimen7\hbox{\kern1.3ex\char'47}}#1\relax     \else\if d#1\relax\rlap{\raise.5\dimen7\hbox{\kern.9ex     \char'47}}#1\relax\else\if D#1\relax\rlap{\raise.5\dimen7     \hbox{\kern1.4ex\char'47}}#1\relax\else\if l#1\relax     \rlap{\raise.5\dimen7\hbox{\kern.4ex\char'47}}#1\relax     \else\if L#1\relax\rlap{\raise.5\dimen7\hbox{\kern.7ex     \char'47}}#1\relax\else\message{accent \string\soft     \space #1 not defined!}#1\relax\fi\fi\fi\fi\fi\fi} "}

@unpublished{NB,
	Author = {Nyberg-Brodda, Carl-Fredrik},
	Title = {The second growth rate of free groups},
	Note = {Preprint, 2026. Personal communication.},
	Year = {},
	}

@book {Gromov,
    AUTHOR = {Gromov, Misha},
     TITLE = {Metric structures for {R}iemannian and non-{R}iemannian
              spaces},
    SERIES = {Progress in Mathematics},
    VOLUME = {152},
      NOTE = {Based on the 1981 French original,
              With appendices by M.\ Katz, P.\ Pansu and S.\ Semmes,
              Translated from the French by Sean Michael Bates},
 PUBLISHER = {Birkh\"auser Boston, Inc., Boston, MA},
      YEAR = {1999},
     PAGES = {xx+585},
      ISBN = {0-8176-3898-9},
   MRCLASS = {53C23 (53-02)},
  MRNUMBER = {1699320},
MRREVIEWER = {Igor\ Belegradek},
}

@misc{SSfp,
      title={A finitely presented group of non-uniform exponential growth}, 
      author={Roman Sauer and Eduard Schesler},
      year={2026},
      eprint={2605.30163},
      archivePrefix={arXiv},
      primaryClass={math.GR},
      url={https://arxiv.org/abs/2605.30163}, 
      note={{\tt arXiv:2605.30163 [math.GR]}}
}

@inproceedings{GromovHyperbolic,
	Address = {New York},
	Author = {Gromov, M.},
	Booktitle = {Essays in group theory},	
	Editor = {S. M. Gersten},
	Pages = {75-263},
	Publisher = {Springer},
	Series = {Math. Sci. Res. Inst. Publ.},	
	Title = {Hyperbolic groups},
	Volume = {8},
	Year = {1987}}

@article {W,
    AUTHOR = {Wilson, John S.},
     TITLE = {On exponential growth and uniformly exponential growth for
              groups},
   JOURNAL = {Invent. Math.},
  FJOURNAL = {Inventiones Mathematicae},
    VOLUME = {155},
      YEAR = {2004},
    NUMBER = {2},
     PAGES = {287--303},
      ISSN = {0020-9910,1432-1297},
   MRCLASS = {20F69 (20E22 20F05 20F65)},
  MRNUMBER = {2031429},
MRREVIEWER = {Victor\ Petrogradsky},
       DOI = {10.1007/s00222-003-0321-8},
       URL = {https://doi-org.kyoto-u.idm.oclc.org/10.1007/s00222-003-0321-8},
}

@article {K,
    AUTHOR = {Koubi, Malik},
     TITLE = {Croissance uniforme dans les groupes hyperboliques},
   JOURNAL = {Ann. Inst. Fourier (Grenoble)},
  FJOURNAL = {Universit\'e{} de Grenoble. Annales de l'Institut Fourier},
    VOLUME = {48},
      YEAR = {1998},
    NUMBER = {5},
     PAGES = {1441--1453},
      ISSN = {0373-0956,1777-5310},
   MRCLASS = {20F32 (57M07)},
  MRNUMBER = {1662255},
MRREVIEWER = {Nadia\ Benakli},
       DOI = {10.5802/aif.1661},
       URL = {https://doi-org.kyoto-u.idm.oclc.org/10.5802/aif.1661},
}

@article{L,
    AUTHOR = {L\"oh, Clara},
     TITLE = {Exponential growth rates in hyperbolic groups [{\it after}
              {K}oji {F}ujiwara and {Z}lil {S}ela]},
      NOTE = {S\'eminaire Bourbaki. Vol. 2022/2023. Expos\'es 1197--1210},
   JOURNAL = {Ast\'erisque},
  FJOURNAL = {Ast\'erisque},
    NUMBER = {446},
      YEAR = {2023},
     PAGES = {Exp. No. 1206, 365--382},
      ISSN = {0303-1179,2492-5926},
      ISBN = {978-2-85629-984-5},
   MRCLASS = {20F67 (03E10 20F65)},
  MRNUMBER = {4713472},
       DOI = {10.24033/ast.1215},
       URL = {https://doi-org.kyoto-u.idm.oclc.org/10.24033/ast.1215},
}

@article {Magnus,
    AUTHOR = {Magnus, Wilhelm},
     TITLE = {\"{U}ber diskontinuierliche {G}ruppen mit einer definierenden
              {R}elation. ({D}er {F}reiheitssatz)},
   JOURNAL = {J. Reine Angew. Math.},
  FJOURNAL = {Journal f\"ur die Reine und Angewandte Mathematik. [Crelle's
              Journal]},
    VOLUME = {163},
      YEAR = {1930},
     PAGES = {141--165},
      ISSN = {0075-4102,1435-5345},
   MRCLASS = {99-04},
  MRNUMBER = {1581238},
       DOI = {10.1515/crll.1930.163.141},
       URL = {https://doi-org.kyoto-u.idm.oclc.org/10.1515/crll.1930.163.141},
}

@article {dlH,
    AUTHOR = {de la Harpe, Pierre},
     TITLE = {Uniform growth in groups of exponential growth},
 BOOKTITLE = {Proceedings of the {C}onference on {G}eometric and
              {C}ombinatorial {G}roup {T}heory, {P}art {II} ({H}aifa, 2000)},
   JOURNAL = {Geom. Dedicata},
  FJOURNAL = {Geometriae Dedicata},
    VOLUME = {95},
      YEAR = {2002},
     PAGES = {1--17},
      ISSN = {0046-5755,1572-9168},
   MRCLASS = {20E07 (20F65)},
  MRNUMBER = {1950882},
MRREVIEWER = {Victor\ Petrogradsky},
       DOI = {10.1023/A:1021273024728},
       URL = {https://doi-org.kyoto-u.idm.oclc.org/10.1023/A:1021273024728},
}

@manual{GAP4,
    organization = "The GAP~Group",
    title        = "{GAP -- Groups, Algorithms, and Programming,
                    Version 4.16.0}",
    year         = 2026,
    url          = "\url{https://www.gap-system.org}",
    }

@article {GdlH,
    AUTHOR = {Grigorchuk, R. and de la Harpe, P.},
     TITLE = {On problems related to growth, entropy, and spectrum in group
              theory},
   JOURNAL = {J. Dynam. Control Systems},
  FJOURNAL = {Journal of Dynamical and Control Systems},
    VOLUME = {3},
      YEAR = {1997},
    NUMBER = {1},
     PAGES = {51--89},
      ISSN = {1079-2724,1573-8698},
   MRCLASS = {20F32 (20-02)},
  MRNUMBER = {1436550},
MRREVIEWER = {Susan\ Hermiller},
       DOI = {10.1007/BF02471762},
       URL = {https://doi-org.kyoto-u.idm.oclc.org/10.1007/BF02471762},
}

@article {shukhov,
    AUTHOR = {Shukhov, A. G.},
     TITLE = {On the dependence of the growth exponent on the length of the
              defining relation},
   JOURNAL = {Mat. Zametki},
  FJOURNAL = {Matematicheskie Zametki},
    VOLUME = {65},
      YEAR = {1999},
    NUMBER = {4},
     PAGES = {612--618},
      ISSN = {0025-567X,2305-2880},
   MRCLASS = {20F05 (20E07)},
  MRNUMBER = {1715061},
       DOI = {10.1007/BF02675367},
       URL = {https://doi-org.kyoto-u.idm.oclc.org/10.1007/BF02675367},
}

@article {S,
    AUTHOR = {Sambusetti, Andrea},
     TITLE = {Growth tightness of free and amalgamated products},
   JOURNAL = {Ann. Sci. \'Ecole Norm. Sup. (4)},
  FJOURNAL = {Annales Scientifiques de l'\'Ecole Normale Sup\'erieure.
              Quatri\`eme S\'erie},
    VOLUME = {35},
      YEAR = {2002},
    NUMBER = {4},
     PAGES = {477--488},
      ISSN = {0012-9593},
   MRCLASS = {20E06 (20F65)},
  MRNUMBER = {1981169},
MRREVIEWER = {Mihalis\ A.\ Sykiotis},
       DOI = {10.1016/S0012-9593(02)01101-1},
       URL = {https://doi-org.kyoto-u.idm.oclc.org/10.1016/S0012-9593(02)01101-1},
}

@article {FS,
    AUTHOR = {Fujiwara, Koji and Sela, Zlil},
     TITLE = {The rates of growth in a hyperbolic group},
   JOURNAL = {Invent. Math.},
  FJOURNAL = {Inventiones Mathematicae},
    VOLUME = {233},
      YEAR = {2023},
    NUMBER = {3},
     PAGES = {1427--1470},
      ISSN = {0020-9910,1432-1297},
   MRCLASS = {20F69 (20F67)},
  MRNUMBER = {4623546},
MRREVIEWER = {Sylvain\ Maillot},
       DOI = {10.1007/s00222-023-01200-w},
       URL = {https://doi-org.kyoto-u.idm.oclc.org/10.1007/s00222-023-01200-w},
}

@book{dlHarpe2000,
	address = {Chicago, IL},
	author = {de la Harpe, Pierre},
	isbn = {0-226-31719-6; 0-226-31721-8},
	mrclass = {20F65 (20F69 57M07)},
	mrnumber = {1786869 (2001i:20081)},
	mrreviewer = {Lee Mosher},
	pages = {vi+310},
	publisher = {University of Chicago Press},
	series = {Chicago Lectures in Mathematics},
	title = {Topics in geometric group theory},
	year = {2000}}
\end{document}